

 
\documentclass[sn-mathphys-num]{sn-jnl}

\usepackage{enumerate,xfrac,faktor,lineno,nicefrac,float,amssymb}
\usepackage{booktabs,multirow}

\usepackage{xcolor}

\usepackage{graphicx}%
\usepackage{multirow}%
\usepackage{amsmath,amssymb,amsfonts}%
\usepackage{amsthm}%
\usepackage{mathrsfs}%
\usepackage[title]{appendix}%
\usepackage{xcolor}%
\usepackage{textcomp}%
\usepackage{manyfoot}%
\usepackage{booktabs}%
\usepackage{algorithm}%
\usepackage{algorithmicx}%
\usepackage{algpseudocode}%
\usepackage{listings}%


\theoremstyle{thmstyleone}%
\newtheorem{theorem}{Theorem}
\newtheorem{proposition}[theorem]{Proposition}%
\newtheorem{corollary}[theorem]{Corollary}

\theoremstyle{thmstyletwo}%
\newtheorem{example}{Example}%

\theoremstyle{thmstylethree}%
\newtheorem{definition}{Definition}%

\raggedbottom

\begin{document}

\title[]{On the Cosine Similarity and Orthogonality between Persistence Diagrams}


\author*{\fnm{Azmeer} \sur{Nordin}}\email{nfazmeer.nordin@ukm.edu.my}

\author{\fnm{Mohd Salmi Md} \sur{Noorani}}\email{msn@ukm.edu.my}
\equalcont{These authors contributed equally to this work.}

\author{\fnm{Nurulkamal} \sur{Masseran}}\email{kamalmsn@ukm.edu.my}
\equalcont{These authors contributed equally to this work.}

\author{\fnm{Mohd Sabri} \sur{Ismail}}\email{sabriismail@ukm.edu.my}
\equalcont{These authors contributed equally to this work.}

\author{\fnm{Nur Firyal} \sur{Roslan}}\email{nurfiryal@ukm.edu.my}
\equalcont{These authors contributed equally to this work.}

\affil{\orgdiv{Department of Mathematical Sciences}, \orgname{Universiti Kebangsaan Malaysia}, \orgaddress{\city{Bangi}, \postcode{43600}, \state{Selangor}, \country{Malaysia}}}


\abstract{
	Topological data analysis is an approach to study shape of a data set by means of topology. Its main object of study is the persistence diagram, which represents the topological features of the data set at different spatial resolutions. Multiple data sets can be compared by the similarity of their diagrams to understand their behaviors in relative to each other. The bottleneck and Wasserstein distances are often used as a tool to indicate the similarity. In this paper, we introduce cosine similarity as a new indicator for the similarity between persistence diagrams and investigate its properties. Furthermore, it leads to the new notion of orthogonality between persistence diagrams. It turns out that the orthogonality refers to perfect dissimilarity between persistence diagrams under the cosine similarity. Through data demonstration, the cosine similarity is shown to be more accurate than the standard distances to measure the similarity between persistence diagrams.
}

\keywords{persistence diagram, persistence landscape, cosine similarity, orthogonality, persistent homology, topological data analysis}

\pacs[MSC Classification]{55N31, 68T09, 46C05}

\maketitle

\section{Introduction}\label{section: introduction}
Topological data analysis is a fast-growing field for analyzing complex data via theories and techniques from topology. It extracts information from the topological structure or shape of the data which may be obscured by traditional methods in data analysis. It is adaptable to various types of data and also robust to noise \cite{Otter}. Due to its advantages, it has seen wide applications in diverse fields \cite{Wasserman,Chazal_1}, such as enhancing classical machine learning models \cite{Hensel}, identifying cancer tumors in oncology \cite{Skaf}, detecting warning signs of financial crashes in stock markets \cite{Gidea}, and recognizing patterns in multimedia data \cite{Chunyuan,Reise}. These are just a few examples, but the list is endless.

The central tool in topological data analysis is the persistent homology \cite{Edelsbrunner}, which is the application of homology theory on a data set. In particular, it constructs shape of the data set via a certain mathematical structure called a simplicial complex, and then computes its topological features such as connected components, holes and cavities at different spatial resolutions. The existences of those features along the resolutions are recorded in a scatter plot, which is referred as a persistence diagram. Further analysis is conducted on the diagram to obtain meaningful information depending on purposes of the study.

Many studies pertaining to applications of persistent homology will involve working with multiple data sets. Their persistence diagrams can be compared with each other in a certain way to examine whether the data sets have similar structures or behaviors. For instance, this is useful to distinguish an abnormal data set under an extreme event in a particular situation. This is demonstrated in some studies of air pollution based on daily data of Air Quality Index, in which any month with haze episode is characterized by having persistence diagram with abundant features associated to holes \cite{Fariha_1,Fariha_2}. The comparison between persistence diagrams in these works are done simply by observing their numbers of features without the need of other machinery.

Although there are more, the above example is enough to emphasize the importance of comparing persistence diagrams to understand the behaviors of data sets in question. To be more precise, we compare diagrams in terms of their similarity i.e. how close the points of one diagram to those of another one on the same axes. A pair of diagrams is considered similar if both collections of points are sufficiently close, while dissimilar if both are too far apart. This depends on the tool or indicator used to quantify similarity and ascertain whether two diagrams are similar or dissimilar. 

The common way to measure the similarity is to use the bottleneck and Wasserstein distances for persistence diagrams \cite{Edelsbrunner,Dey}. Specifically, two diagrams are deemed similar if either distance is small. This idea of similarity via distances has been utilized in \cite{Bando} to detect causality between two variables via bottleneck distance, and in \cite{Fariha_3} to classify clusters between several data sets via Wasserstein distance for topological clustering.

However in certain cases, the distances may not be accurate to signify the similarity. It is possible for two persistence diagrams which are closely similar to have a noticeably large distance, while those that are clearly dissimilar may record a very small distance (see Examples \ref{example: distance not accurate 1} and \ref{example: distance not accurate 2}). For this reason, a few alternative tools or indicators have been designed in the literature to measure the similarity between persistence diagrams more accurately, such as the RST parametric model \cite{Agami} and the vectorization of persistence diagrams into complex polynomials \cite{DiFabio}.

In this paper, we propose a new indicator called cosine similarity as an alternative to measure the similarity between persistence diagrams. This is motivated by the same notion in Euclidean space, in which the cosine similarity for non-zero vectors $\vec{u},\vec{v} \in \mathbb{R}^d$ is defined as
\begin{equation*}
	S_C(\vec{u},\vec{v})=\cos\alpha = \frac{\vec{u} \cdot \vec{v}}{\|\vec{u}\|\|\vec{v}\|}
\end{equation*}
where $\alpha$ denotes the angle between them. It indicates how close the vectors are aligned in the same direction regardless of their magnitudes. It is widely applied in text mining to measure the similarity between two documents in terms of subject matter \cite{Wang}, and in data mining to measure the cohesion within data clusters \cite{Tan}.

In our approach, we employ persistence landscapes \cite{Bubenik_1,Bubenik_2}, which are an alternative presentation of persistence diagrams, to use the above formula for our cosine similarity. Each landscape is a sequence of piecewise linear functions and hence can be associated to a certain norm and inner product, unlike a persistence diagram. The norm for these landscapes is commonly utilized as a moving summary in persistent homology analysis of time series in various situations such as river water levels \cite{Sadiq_1,Sadiq_2} and financial data \cite{Sabri_1,Sabri_2}, but the applications of its inner product are seldom explored in the literature.

As a related result here, the inner product motivates a new notion of orthogonality between persistence diagrams. In particular, we say that two diagrams are orthogonal if their landscapes admit a zero inner product. It turns out that the orthogonality refers to perfect dissimilarity between two persistence diagrams under the cosine similarity i.e. both diagrams are considered entirely different from each other.

To sum up, the aim of this paper is to introduce the cosine similarity for persistence diagrams and investigate its properties, which are discussed in Section \ref{section: cosine similarity}. Moreover, we explore the notion of orthogonality for the diagrams and provide some justifications on why it means perfect dissimilarity under the cosine similarity in Section \ref{section: orthogonality}. Finally, we compare the accuracy of cosine similarity against the bottleneck, Wasserstein and other relevant distances to measure similarity of persistence diagrams through a data demonstration in Section \ref{section: data demo}.

\section{Persistent Homology}\label{section: persistent homology}
This section provides a brief introduction to the persistent homology which covers some common notions such as simplicial complexes, homology groups, filtrations, persistence diagrams and persistence landscapes. A detailed account of this theory can be referred to \cite{Edelsbrunner,Dey}.

Let $P$ be a non-empty finite set. It shall be called a \emph{point set}. An \emph{abstract simplicial complex} $\mathcal{K}$ is a collection of non-empty subsets of $P$ such that for any set $\sigma \in \mathcal{K}$, its non-empty subset $\tau \subseteq \sigma$ is also in $\mathcal{K}$. Each set $\sigma \in \mathcal{K}$ of size $k+1$ is called a \emph{$k$-simplex}. Its non-empty subset $\tau \subseteq \sigma$ of size $\ell+1$ is called an \emph{$\ell$-face} of $\sigma$. A $(k-1)$-face of $\sigma$ is called a \emph{facet}.

A complex $\mathcal{K}$ can be geometrically visualized in $\mathbb{R}^{|P|-1}$ by associating an element of $P$ to zero vector while others to standard unit vectors. A simplex $\sigma \in $$\mathcal{K}$ corresponds to the convex hull of the relevant vectors in $\mathbb{R}^{|P|-1}$. Hence, a simplex can be thought as a geometrical shape under this visualization. For example, a $0$-simplex, a $1$-simplex, a $2$-simplex and a $3$-simplex are a point, an edge, a solid triangle and a solid tetrahedron, respectively. This visualization is called a \emph{geometric realization} of $\mathcal{K}$. This is useful if we want to relate certain topological features to a simplicial complex, such as connected components and holes. We shall mention that there is another notion of a complex for a point set in an Euclidean space, which is called a geometric simplicial complex, but it is not required in this paper.

A function $f:\mathcal{K} \rightarrow \mathbb{R}$ is said to be \emph{simplex-wise monotone} if $f(\tau) \leq f(\sigma)$ whenever $\tau$ is a face of $\sigma$. In this case, the subcollection $\mathcal{K}_\theta = \left\{\sigma \in \mathcal{K} \mid f(\sigma) \leq \theta\right\}$ is a subcomplex of $\mathcal{K}$ for any $\theta \in \mathbb{R}$. It is clear that $\mathcal{K}_\theta \subseteq \mathcal{K}_{\theta'}$ for $\theta \leq \theta'$. So, the collection of complexes $\left\{\mathcal{K}_\theta \mid \theta \geq \theta_0\right\}$, where $\theta_0 = \min f$, forms a nested inclusion as $\theta$ increases. It is called the \emph{simplicial filtration} of $\mathcal{K}$ by $f$.

For example, Vietoris-Rips filtration is often considered in topological data analysis due to its simple construction. In this case, the complex $\mathcal{K}$ is taken as the power set of $P$. Suppose that $P$ is equipped with a distance $\mathsf{d}$. We define a simplex-wise monotone function $f$ on $\mathcal{K}$ as
\begin{equation*}
	f(\sigma)=\max _{p, q \in \sigma} \mathsf{d}(p, q).
\end{equation*}
The subcomplex $\mathcal{K}_\theta$ is called the \emph{Vietoris-Rips complex} of parameter $\theta$. In particular, a subset $\sigma \subseteq P$ is in $\mathcal{K}_\theta$ if and only if $\mathrm{d}(p, q) \leq \theta$ for any pair $p, q \in \sigma$. The \emph{Vietoris-Rips filtration} is the collection $\left\{\mathcal{K}_\theta \mid \theta \geq 0\right\}$ in this case.

Given a filtration, we are interested to observe the changes in topological structure of $\mathcal{K}_\theta$ as $\theta$ increases. For this purpose, we shall introduce the homology groups induced by a general complex $\mathcal{K}$. Let $\mathcal{S}_k=\left\{\sigma_1, \sigma_2, \ldots, \sigma_{n_k}\right\}$ be a collection of $k$-simplices in $\mathcal{K}$. We define a formal series of $\mathcal{S}_k$ over the field $\mathbb{Z}_2$ as $\sum_{i=1}^{n_k} a_i \sigma_i$ where $a_i \in \mathbb{Z}_2$. The series is called a \emph{$k$-chain}. The collection of all $k$-chains of $\mathcal{K}$ forms an abelian group, which is called the \emph{$k$\textsuperscript{th} chain group} $\mathsf{C}_k(\mathcal{K})$. We set $\mathsf{C}_k(\mathcal{K})$ as the trivial group if $\mathcal{S}_k$ is empty.

For a $k$-simplex $\sigma$, consider the collection of its facets $\left\{\tau_1, \tau_2, \ldots, \tau_{k+1}\right\}$. We denote $\partial_k(\sigma)=\sum_{i=1}^{k+1} \tau_i$ which is a $(k-1)$-chain of $\mathcal{K}$. We extend this map as a homomorphism $\partial_k: \mathsf{C}_k(\mathcal{K}) \rightarrow \mathsf{C}_{k-1}(\mathcal{K})$ by defining
\begin{equation*}
	\partial_k\left(\sum_{i=1}^{n_k} a_i \sigma_i\right)=\sum_{i=1}^{n_k} a_i \partial_k\left(\sigma_i\right).
\end{equation*}
The kernel of $\partial_k$ is called the \emph{$k$\textsuperscript{th} cycle group} $\mathsf{Z}_k(\mathcal{K})$, while the image of $\partial_{k+1}$ is called the \emph{$k$\textsuperscript{th} boundary group} $\mathsf{B}_k(\mathcal{K})$. The \emph{$k$\textsuperscript{th} homology group} is then defined as the quotient group $\mathsf{H}_k(\mathcal{K})=\mathsf{Z}_k(\mathcal{K}) / \mathsf{B}_k(\mathcal{K})$.

Each homology group relates to a certain type of topological features in a complex $\mathcal{K}$. For example, the zeroth and first homology groups reflect the connected components and holes respectively in $\mathcal{K}$, or to be precise, in its geometric realization \cite{Aktas-Akbas-Fatmaoui}.

Now, consider a simplicial filtration $\left\{\mathcal{K}_\theta \mid \theta \geq \theta_0\right\}$. Since $\mathcal{K}_\theta \subseteq \mathcal{K}_{\theta'}$ for $\theta \leq \theta'$, there is a natural homomorphism $\phi_k^{\theta, \theta'}: \mathsf{H}_k(\mathcal{K}_\theta) \rightarrow \mathsf{H}_k(\mathcal{K}_{\theta'})$ given by
\begin{equation*}
	\phi_k^{\theta, \theta'}: c+\mathsf{B}_k(\mathcal{K}_\theta) \mapsto c+\mathsf{B}_k(\mathcal{K}_{\theta'})
\end{equation*}
for any chain $c \in \mathsf{Z}_k(\mathcal{K}_\theta)$. A non-trivial class in $\mathsf{H}_k(\mathcal{K}_\theta)$ may be reduced to be trivial in $\mathsf{H}_k(\mathcal{K}_{\theta'})$ under this map. This motivates the idea of \emph{birth} and \emph{death} of a non-trivial class in the filtration. It describes a topological feature in $\mathcal{K}_\theta$ as mentioned previously. For this reason, we shall refer a non-trivial class in the filtration simply as a \emph{feature}.

A feature $\xi$ is born at $b \in \mathbb{R}$ if $\xi \in \mathsf{H}_k(\mathcal{K}_b)$ and $\big(\phi_k^{\theta, b}\big)^{-1}(\xi)=\emptyset$ for any $\theta<b$. Next, it dies at $d \in \mathbb{R}$ if $\phi_k^{b, d}(\xi)=\boldsymbol{0}$ but $\phi_k^{b, \theta}(\xi) \neq \boldsymbol{0}$ for any $\theta<d$. It is possible that $\xi$ never dies i.e. $\phi_k^{b, \theta}(\xi) \neq \boldsymbol{0}$ for any $\theta>b$. We denote $d=\infty$ in this case. The \emph{lifespan} of $\xi$ is defined as $\ell=d-b$.

From now on, we focus on the features with respect to $k$\textsuperscript{th} homology group. Let $\xi$ be a feature that is born at $b$ and dies at $d$. It can be represented as a point $(b, d)$. The \emph{persistence diagram} $D$ is the set of points (allowing multiplicities) for those features. It can be visualized as a scatter plot of deaths against births of the features. Note that the points in that plot must lie above the diagonal line $\Delta=\left\{(\theta, \theta) \mid \theta \geq \theta_0\right\}$.

A feature whose point is relatively far from the diagonal line is said to be \emph{persistent}. Such feature has a long lifespan. If the point set is sampled from a manifold, then this feature, such as a connected component or a hole, is expected to exist in that manifold. On the other hand, an impersistent feature in the diagram is simply considered a noise due to the filtration, and the abundance of such features may indicate that the sample itself is affected by noise \cite{Ravishanker}.

We shall introduce two types of distances for persistence diagrams $D_1$ and $D_2$. For this purpose, we shall construct a bijection between the diagrams in a certain way. We allow them to add a finite number of points from $\Delta$ (allowing multiplicities) to create new sets $D_1^{\Delta}$ and $D_2^{\Delta}$ and construct a bijection $\gamma: D_1^{\Delta} \rightarrow D_2^{\Delta}$. For example, we define $\gamma$ as follows:
\begin{equation*}
	\gamma:\left(b_1, d_1\right) \mapsto\left(\frac{b_1+d_1}{2}, \frac{b_1+d_1}{2}\right) \quad \textrm{and} \quad \gamma:\left(\frac{b_2+d_2}{2}, \frac{b_2+d_2}{2}\right) \mapsto\left(b_2, d_2\right)
\end{equation*}
for all points $(b_1, d_1) \in D_1$ and $(b_2, d_2) \in D_2$. We call this bijection as \emph{trivial matching}. It pairs each point in $D_1$ and $D_2$ to the closest point in $\Delta$.

Denote the set of those bijections as $\Gamma$. We allow the expression $\infty-\infty=0$ here. The \emph{bottleneck distance} are defined as
\begin{equation*}
	W_{\infty}(D_1, D_2)=\inf _{\gamma \in \Gamma}\left(\sup _{x \in D_1^{\Delta}}\|x-\gamma(x)\|_{\infty}\right)
\end{equation*}
where $\lVert\cdot\rVert_{\infty}$ denotes supremum norm in $\mathbb{R}^2$. Another alternative is the \emph{$p$-Wasserstein distance} for $p \geq 1$, which is given by
\begin{equation*}
	W_p(D_1, D_2)=\inf _{\gamma \in \Gamma}\left(\sum_{x \in D_1^{\Delta}}{\|x-\gamma(x)\|_\infty}^p\right)^{1 / p}.
\end{equation*}
The bijection $\gamma$ that attains the infimum for either distance is called a \emph{perfect matching}.

The set of possible persistence diagrams do not form a vector space as there is no natural operation that can be defined on the set. Several attempts have been made in the literature to transform those diagrams into suitable vectors in a Banach or Hilbert space. This is called a \emph{vectorization} of persistence diagrams. One such transformation is persistence landscape \cite{Bubenik_1, Bubenik_2}.

For this purpose, we assume that each feature has a finite death. For example, this assumption holds in Vietoris-Rips filtration, except for the zeroth homology group. In this case, there is exactly one feature that never dies \cite{Hatcher}. It is often omitted in further analysis beyond the persistence diagram.

Suppose that a persistence diagram $D$ has $n$ features. For a feature $\xi_i$ with birth $b_i$ and death $d_i$, we construct a function $g_i: \mathbb{R} \rightarrow \mathbb{R}$ as
\begin{equation*}
	g_i(t)= 
	\begin{cases}
		t-b_i & \textrm{if } t \in\left[b_i,\left(b_i+d_i\right) / 2\right], \\ 
		d_i-t & \textrm{if } t \in\left[\left(b_i+d_i\right) / 2, d_i\right], \\ 
		0 & \textrm{otherwise}.
	\end{cases}
\end{equation*}
Next, we define the $j$\textsuperscript{th} layer function $\lambda_j: \mathbb{R} \rightarrow \mathbb{R}$ as
\begin{equation*}
	\lambda_j(t)={\max}^{(j)}\left\{g_1(t), g_2(t), \hdots, g_n(t)\right\}
\end{equation*}
where ${\max}^{(j)}$ denotes the $j$\textsuperscript{th} maximum of the above set. For clarity, we set $\lambda_j(t)=0$ for $j>n$. The vector $\vec{\lambda}=\left(\lambda_1, \lambda_2, \hdots\right)$ is called the \emph{persistence landscape} of $D$. Denote the transformation as $\varphi: D \mapsto \vec{\lambda}$. It is proved in \cite{Betthauser-Bubenik-Edwards} that $\varphi$ is an injection from the set $\mathcal{D}$ of possible persistence diagrams to its image $\varphi(\mathcal{D})$. So, every persistence landscape corresponds to a unique persistence diagram.

We shall note that the set $\varphi(\mathcal{D})$ itself is not a vector space, but it can be embedded into a relevant normed space. For a persistence landscape $\vec{\lambda}$, its \emph{$p$-norm} for $p \geq 1$ and \emph{supremum norm} are given respectively as
\begin{equation*}
	\|\vec{\lambda}\|_p=\left(\sum_{j=1}^{\infty}\left(\int\left|\lambda_j(t)\right|^p d t\right)\right)^{1 / p}
\end{equation*}
and
\begin{equation*}
	\|\vec{\lambda}\|_{\infty}=\sup_{j, t}\left|\lambda_j(t)\right|.
\end{equation*}
For $p=2$, the norm induces an inner product as follows:
\begin{equation*}
	\langle\vec{\lambda}, \vec{\mu}\rangle=\sum_{j=1}^{\infty}\left(\int \lambda_j(t) \cdot \mu_j(t) \ d t\right)
\end{equation*}
where $\vec{\lambda}$ and $\vec{\mu}=\left(\mu_1, \mu_2, \hdots\right)$ are persistence landscapes.

The persistence diagram and landscape are stable; that is, they are robust against a small perturbation onto the point set $P$ \cite{Otter,Bubenik_2}. In such situation, they differ slightly in terms of their distances or norms.

Figure \ref{figure: sample illustration} illustrates the persistence diagram and landscape for a sample from unit disc in $\mathbb{R}^2$. The notations $\mathsf{H}_0$ and $\mathsf{H}_1$ below refer to the respective collection of features with respect to zeroth and first homology groups in the Vietoris-Rips filtration of the sample.
\begin{figure}[H]
	\centering
	\includegraphics[scale=0.72]{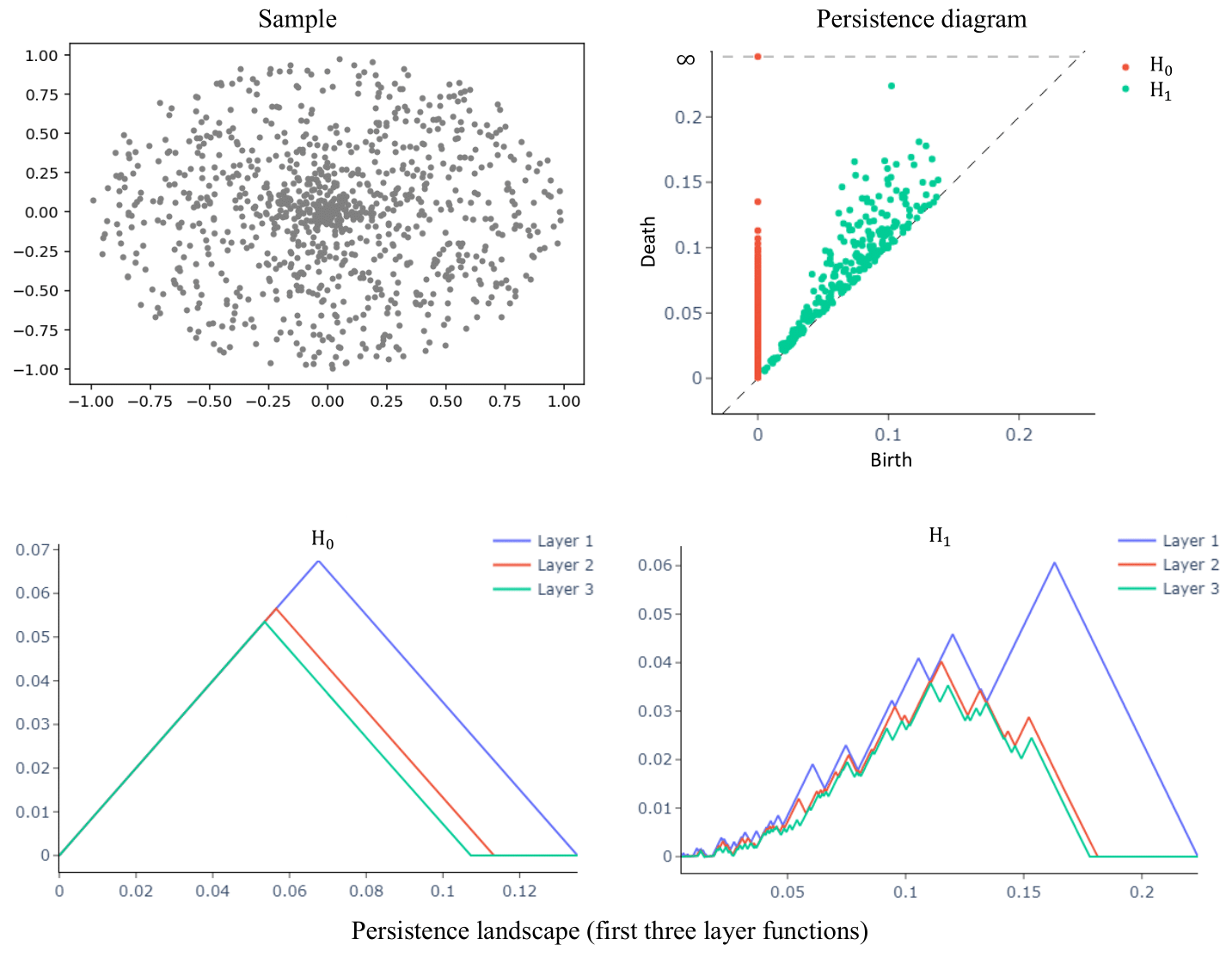}
	\caption{Persistence diagram and landscape for a sample from unit disc in $\mathbb{R}^2$}
	\label{figure: sample illustration}
\end{figure}

\section{Similarity between Persistence Diagrams}\label{section: cosine similarity}
We concentrate on the idea of similarity between persistence diagrams in this section. As mentioned previously, the similarity means how close the points in one diagram to those of another one. We will introduce and explore the properties of cosine similarity as an indicator for the similarity. First, we shall explain why distances may not be an accurate approach for this purpose.

\subsection{Issues with Distances}
Recall that the bottleneck and Wasserstein distances are commonly utilized to indicate the similarity between two persistence diagrams. In particular, these distances measure the degree of difference between two diagrams. The distances defined by the norms of their persistence landscapes can be considered for this purpose as well.

It is generally thought that two diagrams are closely similar whenever their distance is close to zero. However, this assertion is actually false in certain cases. It is possible for two diagrams to be almost equal but their distance is significantly larger than zero. Conversely, two diagrams that are entirely different may record a very small distance. These are illustrated in the following examples.

\begin{example}\label{example: distance not accurate 1}
	Consider two persistence diagrams
	\begin{equation*}
		D_1=\left\{(\theta, \theta+1) \mid \theta=0,1, \ldots, n-1\right\} \quad \textrm{and} \quad D_2=D_1 \cup\left\{(n, n+1)\right\}
	\end{equation*}
	for some $n \in \mathbb{N}$. It is easy to verify the following:
	\begin{gather*}
		W_{\infty}(D_1, D_2)=W_p(D_1, D_2)=\frac{1}{2}, \\
		\left\|\varphi(D_1)-\varphi(D_2)\right\|_{\infty}=\frac{1}{2} \quad \textrm{and} \quad\left\|\varphi(D_1)-\varphi(D_2)\right\|_p=\frac{1}{2}\left(\frac{1}{p+1}\right)^{\frac{1}{p}}.
	\end{gather*}
	The above distances are non-zero and constant for any $n$. We shall expect a significant difference between both diagrams based on the distances. However, they have $n$ common points and differ only by a point. As n increases, the diagrams have a higher proportion of common points and thus are considered almost equal. However, this is not reflected by the distances. This example demonstrates that two diagrams can be very similar but their distances are notably larger than zero.
\end{example}

\begin{example}\label{example: distance not accurate 2}
	Consider two persistence diagrams
	\begin{equation*}
		D_3=\left\{\left.\left(\theta+\frac{1}{2}-\frac{1}{2 m}, \theta+\frac{1}{2}+\frac{1}{2 m}\right) \right\rvert\, \theta=0,1, \ldots, n-1\right\}
	\end{equation*}
	and
	\begin{equation*}
		D_4=\left\{\left.\left(\theta+\frac{1}{2}-\frac{1}{2 m}, \theta+\frac{1}{2}+\frac{1}{2 m}\right) \right\rvert\, \theta=2n, 2n+1, \ldots, 2n+(n-1)\right\}
	\end{equation*}
	for some $m, n \in \mathbb{N}$. Simple calculations show that
	\begin{gather*}
		W_{\infty}(D_3, D_4)=\frac{1}{2 m}, \qquad W_p(D_3, D_4)=\frac{(2 n)^{\frac{1}{p}}}{2 m}, \\
		\|\varphi(D_3)-\varphi(D_4)\|_{\infty}=\frac{1}{2 m} \quad \textrm{and} \quad \|\varphi(D_3)-\varphi(D_4)\|_p=\frac{1}{2 m}\left(\frac{2 n}{m(p+1)}\right)^{\frac{1}{p}}.
	\end{gather*}
	By fixing $n$, the distances approach zero as $m$ increases. We shall expect both diagrams to be closely similar based on the distances. However, for sufficiently large $n$, the points of $D_3$ are clearly very far apart from those of $D_4$ for all $m$. Hence, both diagrams can be considered entirely different. This example shows that two diagrams can be dissimilar but their distances are very small.
\end{example}

\subsection{Cosine Similarity}
The previous argument shows that distances may not be accurate to indicate similarity between persistence diagrams in some cases. Due to this reason, there is a need for a better indicator for the similarity. We introduce the cosine similarity for this purpose. In this section, the notation $\Vert \cdot \rVert$ refers to the $2$-norm for persistence landscapes.

\begin{definition}
	For non-empty persistence diagrams $D_1$ and $D_2$, the \emph{(landscape) cosine similarity} $\varsigma$ is defined as
	\begin{equation*}
		\varsigma(D_1, D_2)=\frac{\left\langle\varphi(D_1), \varphi(D_2)\right\rangle}{\left\|\varphi(D_1)\right\|\left\|\varphi(D_2)\right\|}.
	\end{equation*}
\end{definition}

The assumptions on $D_1$ and $D_2$ are required in the above definition to avoid any zero norm in the denominator. Only empty persistence diagram is mapped to the zero landscape under $\varphi$ whose norm is zero.

We investigate the range of $\varsigma$. It is clear that $\varsigma$ is non-negative. The Cauchy-Schwarz inequality \cite{Beckenbach-Bellman} implies that
\begin{equation}\label{equation: Cauchy-Schwarz}
	\left\langle\varphi(D_1), \varphi(D_2)\right\rangle \leq\left\|\varphi(D_1)\right\|\left\|\varphi(D_2)\right\|.
\end{equation}
This proves that $\varsigma \in[0,1]$. Intuitively from the above inequality, the cosine similarity calculates the proportion of inner product over its possible maximum value. Here, the values $0$ and $1$ correspond to the extreme cases for $\varsigma$, which shall be called the \emph{perfect dissimilarity} and \emph{perfect similarity} between diagrams, respectively. We shall interpret the meaning of these cases in details.

The case $\varsigma(D_1, D_2)=1$ corresponds to the equality in (\ref{equation: Cauchy-Schwarz}). This holds if and only if $\varphi(D_2)=\alpha \cdot \varphi(D_1)$ for a real $\alpha>0$. Based on the definition of a persistence landscape, each layer function is piecewise linear with slope $0$, $1$ or $-1$. This implies $\alpha=1$. Since $\varphi$ is injective, the equation $\varphi(D_2)=\varphi(D_1)$ is equivalent to $D_2=D_1$. Thus, the perfect similarity under $\varsigma$ means simply as the equality of persistence diagrams.

Now, another case $\varsigma(D_1, D_2)=0$ indicates a zero inner product between persistence landscapes of $D_1$ and $D_2$. In this instance, the diagrams shall be referred as orthogonal. We will discuss the notion of orthogonality in Section \ref{section: orthogonality} and justify why it signifies the perfect dissimilarity between persistence diagrams.

The cosine similarity is stable against a small perturbation in persistence diagrams.

\begin{proposition}\label{proposition: continuity of cosine similarity}
	Let $D_1$ and $D_2$ be non-empty persistence diagrams. Fix a real $\eta>0$ such that
	\begin{equation*}
		\eta^2=\min \left\{1, \frac{\left\|\varphi(D_1)\right\|^2}{8}\left(W_{\infty}(D_1, \emptyset)+\frac{1}{3}\right)^{-1}, \frac{\left\|\varphi(D_2)\right\|^2}{8}\left(W_{\infty}(D_2, \emptyset)+\frac{1}{3}\right)^{-1}\right\}.
	\end{equation*}
	Then there are positive constants $c_1$ and $c_2$, which depend on $D_1$ and $D_2$, such that
	\begin{equation*}
		\left|\varsigma(\tilde{D}_1, \tilde{D}_2)-\varsigma(D_1, D_2)\right| \leq c_1 \cdot W_2(D_1, \tilde{D}_1)+c_2 \cdot W_2(D_2, \tilde{D}_2)
	\end{equation*}
	for any pair of non-empty persistence diagrams $\tilde{D}_1$ and $\tilde{D}_2$ with $W_2(D_1, \tilde{D}_1) \leq \eta$ and $W_2(D_2, \tilde{D}_2) \leq \eta$.
\end{proposition}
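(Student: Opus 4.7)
The plan is to reduce the continuity of $\varsigma$ to two ingredients: (i) elementary Cauchy--Schwarz and reverse triangle manipulations relating differences of $\varsigma$ to $L^2$-norm differences of landscapes, and (ii) a quantitative stability estimate converting those landscape differences into $W_2$-distances of the underlying diagrams. Throughout I abbreviate $\lambda_i = \varphi(D_i)$ and $\tilde\lambda_i = \varphi(\tilde D_i)$.

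First, I would split
\[
\varsigma(\tilde D_1,\tilde D_2)-\varsigma(D_1,D_2)
= \frac{\langle \tilde\lambda_1,\tilde\lambda_2\rangle - \langle \lambda_1,\lambda_2\rangle}{\|\tilde\lambda_1\|\,\|\tilde\lambda_2\|}
+ \langle \lambda_1,\lambda_2\rangle\!\left(\frac{1}{\|\tilde\lambda_1\|\,\|\tilde\lambda_2\|} - \frac{1}{\|\lambda_1\|\,\|\lambda_2\|}\right),
\]
and telescope each summand. Adding and subtracting $\langle \lambda_1, \tilde\lambda_2\rangle$ in the first numerator and applying Cauchy--Schwarz gives
\[
|\langle \tilde\lambda_1,\tilde\lambda_2\rangle - \langle \lambda_1,\lambda_2\rangle| \le \|\tilde\lambda_1 - \lambda_1\|\,\|\tilde\lambda_2\| + \|\lambda_1\|\,\|\tilde\lambda_2 - \lambda_2\|,
\]
and the second summand is bounded by an analogous expression after invoking the reverse triangle inequality $|\,\|\tilde\lambda_i\| - \|\lambda_i\|\,|\le\|\tilde\lambda_i - \lambda_i\|$ together with the Cauchy--Schwarz bound $\langle \lambda_1,\lambda_2\rangle \le \|\lambda_1\|\,\|\lambda_2\|$.

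Second, I would invoke the known $L^2$-stability of persistence landscapes. Because each layer $\lambda_j$ is $1$-Lipschitz with support in an interval of length at most $2W_\infty(D_i,\emptyset)$, one obtains an estimate of the form $\|\tilde\lambda_i - \lambda_i\| \le K(D_i)\cdot W_2(D_i,\tilde D_i)$, with $K(D_i)^2$ essentially proportional to $W_\infty(D_i,\emptyset) + \tfrac{1}{3}$. The threshold on $\eta$ is engineered precisely so that $W_2(D_i,\tilde D_i) \le \eta$ forces $\|\tilde\lambda_i - \lambda_i\| \le \|\lambda_i\|/2$; by reverse triangle inequality this gives $\|\tilde\lambda_i\| \ge \|\lambda_i\|/2 > 0$, and hence the denominator $\|\tilde\lambda_1\|\,\|\tilde\lambda_2\|$ is bounded below by $\|\lambda_1\|\,\|\lambda_2\|/4$. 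Substituting the stability bound and this lower bound on the denominator into the decomposition above yields an inequality of the claimed form, with constants $c_1, c_2$ depending explicitly on $\|\lambda_1\|$, $\|\lambda_2\|$, $W_\infty(D_1,\emptyset)$ and $W_\infty(D_2,\emptyset)$.

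The main obstacle is the $L^2$-stability estimate itself: in contrast to the pointwise ($L^\infty$) stability, which follows immediately from the $1$-Lipschitz property of the layers, the $L^2$ bound requires that the support of $\lambda_j - \tilde\lambda_j$ be controlled, and this is not automatic from $W_2$ alone. The factor $W_\infty(D_i,\emptyset) + \tfrac{1}{3}$ in the definition of $\eta$ reflects this dependence on the diameter of the diagrams, and isolating the correct prefactor $K(D_i)$ is the delicate step. Once this is settled, the remainder of the argument is bookkeeping via Cauchy--Schwarz.
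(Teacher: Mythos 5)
Your argument is correct and rests on the same two pillars as the paper's proof: the quantitative landscape stability bound $\|\varphi(\tilde{D}_i)-\varphi(D_i)\|^2 \leq 2\bigl(W_{\infty}(D_i,\emptyset)+\tfrac{1}{3}\bigr)W_2(D_i,\tilde{D}_i)^2$ (valid for $W_2(D_i,\tilde{D}_i)\leq 1$, which is why the $1$ appears in the minimum defining $\eta^2$ --- worth stating explicitly), and the observation that the choice of $\eta$ forces $\|\varphi(\tilde{D}_i)\|\geq \|\varphi(D_i)\|/2$, so the perturbed denominators stay bounded below. Where you differ is the algebraic decomposition. The paper rearranges the polarization identity into
\begin{equation*}
	2\,\varsigma(D_1,D_2)=\frac{\|\varphi(D_1)\|}{\|\varphi(D_2)\|}+\frac{\|\varphi(D_2)\|}{\|\varphi(D_1)\|}-\frac{\|\varphi(D_1)-\varphi(D_2)\|^2}{\|\varphi(D_1)\|\,\|\varphi(D_2)\|}
\end{equation*}
and bounds the variation of each of the three terms separately, which is somewhat cumbersome (the third term requires a factorization of a difference of squares). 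You instead telescope the quotient directly, splitting off the variation of the inner product (controlled by Cauchy--Schwarz after adding and subtracting a cross term) from the variation of the reciprocal of the product of norms (controlled by the reverse triangle inequality and the lower bound on the perturbed norms). Your route is shorter and avoids the detour through the norm of the difference; the paper's identity has the side benefit of motivating the variant indicator $\varrho$ discussed in its conclusion. Both yield constants $c_1,c_2$ of the same character, depending on $\|\varphi(D_i)\|$ and $W_{\infty}(D_i,\emptyset)$. The one step you flag as delicate --- the $L^2$ stability estimate itself --- is not reproved in the paper either; it is cited from Bubenik, so you are on equal footing there.
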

\begin{proof}
	Recall the equation for an inner product as follows:
	\begin{equation}\label{equation: inspiration variation}
		\left\|\varphi(D_1)-\varphi(D_2)\right\|^2=\left\|\varphi(D_1)\right\|^2+\left\|\varphi(D_2)\right\|^2-2\left\langle\varphi(D_1), \varphi(D_2)\right\rangle.
	\end{equation}
	We rearrange this equation to obtain
	\begin{equation*}
		2 \cdot \varsigma(D_1, D_2)=\frac{\left\|\varphi(D_1)\right\|}{\left\|\varphi(D_2)\right\|}+\frac{\left\|\varphi(D_2)\right\|}{\left\|\varphi(D_1)\right\|}-\frac{\left\|\varphi(D_1)-\varphi(D_2)\right\|^2}{\left\|\varphi(D_1)\right\|\left\|\varphi(D_2)\right\|}.
	\end{equation*}
	The same holds for $\varsigma(\tilde{D}_1, \tilde{D}_2)$. Hence
	\begin{align*}
		2 \left|\varsigma(\tilde{D}_1, \tilde{D}_2)-\varsigma(D_1, D_2)\right| \leq & \left|\frac{\|\varphi(\tilde{D}_1)\|}{\|\varphi(\tilde{D}_2)\|}-\frac{\|\varphi(D_1)\|}{\|\varphi(D_2)\|}\right|
		+\left|\frac{\|\varphi(\tilde{D}_2)\|}{\|\varphi(\tilde{D}_1)\|}-\frac{\|\varphi(D_2)\|}{\|\varphi(D_1)\|}\right| \\
		& \qquad +\left|\frac{{\|\varphi(\tilde{D}_1)-\varphi(\tilde{D}_2)\|}^2}{\|\varphi(\tilde{D}_1)\|\|\varphi(\tilde{D}_2)\|}-\frac{{\|\varphi(D_1)-\varphi(D_2)\|}^2}{\|\varphi(D_1)\|\|\varphi(D_2)\|}\right|.
	\end{align*}
	We shall find a suitable bound for each modulus in the right-hand side (RHS) of the above inequality.
	
	For each $i \in\{1,2\}$, it is proved in \cite{Bubenik_1} that
	\begin{equation}\label{equation: bound for norm and Wasserstein}
		{\|\varphi(\tilde{D}_i)-\varphi(D_i)\|}^2 \leq 2\left(W_{\infty}(D_i, \emptyset)+\frac{1}{3}\right) {W_2(D_i, \tilde{D}_i)}^2
	\end{equation}
	if $W_2(D_i, \tilde{D}_i) \leq 1$. We apply triangle inequality and the assumption on $\eta$ to the above result to obtain
	\begin{equation*}
		\left|\|\varphi(\tilde{D}_i)\|-\|\varphi(D_i)\|\right| \leq \frac{\|\varphi(D_i)\|}{2}
	\end{equation*}
	or equivalently
	\begin{equation}\label{equation: bound for norm}
		\frac{\|\varphi(D_i)\|}{2} \leq\|\varphi(\tilde{D}_i)\| \leq \frac{3\left\|\varphi(D_i)\right\|}{2}.
	\end{equation}
	With these findings, we use triangle inequality, (\ref{equation: bound for norm and Wasserstein}) and (\ref{equation: bound for norm}) to determine bound for the first modulus as follows:
	\begin{align*}
		&\left|\frac{\|\varphi(\tilde{D}_2)\|}{\|\varphi(\tilde{D}_1)\|}-\frac{\|\varphi(D_2)\|}{\|\varphi(D_1)\|}\right| \\
		&\qquad \leq \frac{\|\varphi(D_2)\| \cdot\left|\|\varphi(\tilde{D}_1)\|-\|\varphi(D_1)\right|+\|\varphi(D_1)\| \cdot\left|\|\varphi(\tilde{D}_2)\|-\|\varphi(D_2)\| \right|}{\|\varphi(D_2)\|\|\varphi(\tilde{D}_2)\|} \\
		& \qquad \leq \frac{2\left\|\varphi(D_2)\right\| \cdot\left|\|\varphi(\tilde{D}_1)\|-\|\varphi(D_1)\right|+2\left\|\varphi(D_1)\right\| \cdot\left|\|\varphi(\tilde{D}_2)\|-\|\varphi(D_2)\| \right|}{{\|\varphi(D_2)\|}^2} \\
		& \qquad \leq \frac{2\sqrt{2}}{\|\varphi(D_2)\|}\left(W_{\infty}(D_1, \emptyset)+\frac{1}{3}\right)^{\frac{1}{2}} W_2(D_1, \tilde{D}_1)\\
		& \qquad \qquad \qquad + 
		\frac{2\sqrt{2}\left\|\varphi(D_1)\right\|}{{\|\varphi(D_2)\|}^2}\left(W_{\infty}(D_2, \emptyset)+\frac{1}{3}\right)^{\frac{1}{2}} W_2(D_2, \tilde{D}_2).
	\end{align*}
	Similar result is achieved for the second modulus by symmetry.
	
	We employ the same idea to the third modulus, but it is more cumbersome. Again, we utilize triangle inequality and (\ref{equation: bound for norm}) to lead to
	\begin{align*}
		&\left|\frac{{\|\varphi(\tilde{D}_1)-\varphi(\tilde{D}_2)\|}^2}{\|\varphi(\tilde{D}_1)\|\|\varphi(\tilde{D}_2)\|}-\frac{{\|\varphi(D_1)-\varphi(D_2)\|}^2}{\|\varphi(D_1)\|\|\varphi(D_2)\|}\right| \\
		&\qquad \leq \frac{4}{\|\varphi(D_1)\|\|\varphi(D_2)\|} \cdot \left|{\|\varphi(\tilde{D}_1)-\varphi(\tilde{D}_2)\|}^2 - {\|\varphi(D_1)-\varphi(D_2)\|}^2 \right|\\
		&\qquad \qquad \qquad + \frac{4\left\|\varphi(D_1)-\varphi(D_2)\right\|^2}{\|\varphi(D_1)\|{\|\varphi(D_2)\|}^2} \cdot \left|\|\varphi(\tilde{D}_2)\|- \|\varphi(D_2)\| \right| \\
		&\qquad \qquad \qquad + \frac{4\left\|\varphi(D_1)-\varphi(D_2)\right\|^2}{{\|\varphi(D_1)\|}^2{\|\varphi(D_2)\|}^2} \cdot \|\varphi(\tilde{D}_1)\| \cdot \left|\|\varphi(\tilde{D}_1)\|- \|\varphi(D_1)\| \right|.
	\end{align*}
	It remains to find a bound for each term in the RHS of the above inequality. The aim here is to eliminate the dependency on $\tilde{D}_1$ and $\tilde{D}_2$. So, it is sufficient to consider only the expressions containing these diagrams. In the second and third terms, the bounds for the relevant expressions are obtained easily from (\ref{equation: bound for norm and Wasserstein}) and (\ref{equation: bound for norm}), which are
	\begin{equation*}
		\left|\|\varphi(\tilde{D}_2)\|- \|\varphi(D_2)\| \right| \leq \sqrt{2} \left(W_{\infty}(D_2, \emptyset)+\frac{1}{3}\right)^{\frac{1}{2}} W_2(D_2, \tilde{D}_2)
	\end{equation*}
	and
	\begin{equation*}
		\|\varphi(\tilde{D}_1)\| \cdot \left|\|\varphi(\tilde{D}_1)\|- \|\varphi(D_1)\| \right| \leq \frac{3 \sqrt{2} \left\|\varphi(D_1)\right\|}{2}\left(W_{\infty}(D_1, \emptyset)+\frac{1}{3}\right)^{\frac{1}{2}} W_2(D_1, \tilde{D}_1).
	\end{equation*}
	For the first term, we consider two separate expressions below and then apply triangle inequality, (\ref{equation: bound for norm and Wasserstein}) and (\ref{equation: bound for norm}) as follows:
	\begin{align*}
		& \|\varphi(\tilde{D}_1)-\varphi(\tilde{D}_2)\|+\|\varphi(D_1)-\varphi(D_2)\| \\
		& \qquad \leq \|\varphi(\tilde{D}_1)\|+\|\varphi(\tilde{D}_2)\|+\|\varphi(D_1)-\varphi(D_2)\| \\
		& \qquad \leq \frac{3\left\|\varphi(D_1)\right\|}{2}+\frac{3\left\|\varphi(D_2)\right\|}{2}+\|\varphi(D_1)-\varphi(D_2)\|
	\end{align*}
	and
	\begin{align*}
		& \left|\|\varphi(\tilde{D}_1)-\varphi(\tilde{D}_2)\|-\|\varphi(D_1)-\varphi(D_2)\|\right| \\
		& \qquad \leq \|\varphi(\tilde{D}_1)-\varphi(D_1)\|+\|\varphi(\tilde{D}_2)-\varphi(D_2)\| \\
		& \qquad \leq \sqrt{2}\left(W_{\infty}(D_1, \emptyset)+\frac{1}{3}\right)^{\frac{1}{2}} W_2(D_1, \tilde{D}_1)+\sqrt{2}\left(W_{\infty}(D_2, \emptyset)+\frac{1}{3}\right)^{\frac{1}{2}} W_2(D_2, \tilde{D}_2).
	\end{align*}
	Overall, we combine all relevant inequalities to calculate the constants $c_1$ and $c_2$.
\end{proof}

Although not required here, the previous proposition implies that $\varsigma$ is a continuous map. For this purpose, we equip the set $\mathcal{D}$ of persistence diagrams with $2$-Wasserstein distance. We consider the product metric on $\mathcal{D}^2$. The continuity of $\varsigma$ is proved below.

\begin{corollary}
	The map $\varsigma:\left(\mathcal{D} \setminus \{\emptyset\}\right)^2 \rightarrow \mathbb{R}$ is continuous.
\end{corollary}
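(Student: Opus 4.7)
The plan is to deduce the corollary as an immediate consequence of Proposition \ref{proposition: continuity of cosine similarity}, which already encodes a local Lipschitz-type estimate for $\varsigma$. In effect, the proposition does all of the analytic work, and what remains is a standard $\varepsilon$-$\delta$ packaging together with the observation that the product metric on $\mathcal{D}^2$ controls each coordinate.

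Concretely, I would fix an arbitrary point $(D_1, D_2) \in (\mathcal{D} \setminus \{\emptyset\})^2$ and apply Proposition \ref{proposition: continuity of cosine similarity} to produce the threshold $\eta > 0$ and the positive constants $c_1, c_2$ depending only on $D_1$ and $D_2$. Given any $\varepsilon > 0$, I would then choose
\begin{equation*}
    \delta = \min\left\{\eta,\ \frac{\varepsilon}{2 c_1},\ \frac{\varepsilon}{2 c_2}\right\}.
\end{equation*}
For any pair $(\tilde{D}_1, \tilde{D}_2) \in (\mathcal{D} \setminus \{\emptyset\})^2$ whose product-metric distance from $(D_1, D_2)$ is at most $\delta$, each individual coordinate distance $W_2(D_i, \tilde{D}_i)$ is bounded by $\delta \le \eta$, so the hypotheses of Proposition \ref{proposition: continuity of cosine similarity} are met. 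Substituting into its conclusion gives
\begin{equation*}
    |\varsigma(\tilde{D}_1, \tilde{D}_2) - \varsigma(D_1, D_2)| \le c_1 \cdot W_2(D_1, \tilde{D}_1) + c_2 \cdot W_2(D_2, \tilde{D}_2) \le \frac{\varepsilon}{2} + \frac{\varepsilon}{2} = \varepsilon,
\end{equation*}
which is exactly continuity at $(D_1, D_2)$. Since the point was arbitrary, $\varsigma$ is continuous on its entire domain.

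There is really no hard step here; the only point worth flagging is that the argument uses the fact that in the product metric (whether defined by supremum, sum, or Euclidean combination of the two coordinate distances) the individual distances $W_2(D_i, \tilde{D}_i)$ are each dominated by the product distance, so a single choice of $\delta$ suffices to control both simultaneously. The true analytic content, in particular the use of the bound \eqref{equation: bound for norm and Wasserstein} from \cite{Bubenik_1} and the careful triangle-inequality juggling to remove the dependence on $\tilde{D}_1$ and $\tilde{D}_2$ from the denominators, has already been carried out inside Proposition \ref{proposition: continuity of cosine similarity}.
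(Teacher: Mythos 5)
Your proposal is correct and follows essentially the same route as the paper's own proof: the identical choice $\delta=\min\{\eta,\epsilon/(2c_1),\epsilon/(2c_2)\}$, the observation that the product metric dominates each coordinate distance $W_2(D_i,\tilde{D}_i)$, and a direct substitution into the conclusion of Proposition \ref{proposition: continuity of cosine similarity}. Nothing further is needed.
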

\begin{proof}
	Let $D_1$ and $D_2$ be non-empty persistence diagrams. Consider the positive real $\eta, c_1$ and $c_2$ from Proposition 1. Fix a real $\epsilon>0$. Set
	\begin{equation*}
		\delta=\min \left\{\eta, \frac{\epsilon}{2 c_1}, \frac{\epsilon}{2 c_2}\right\}.
	\end{equation*}
	The inequality
	\begin{equation*}
		\sqrt{{W_2(D_1, \tilde{D}_1)}^2+{W_2(D_2, \tilde{D}_2)}^2}<\delta
	\end{equation*}
	implies that $W_2(D_1, \tilde{D}_1) \leq \eta$ and $W_2(D_2, \tilde{D}_2) \leq \eta$. So, the conclusion of Proposition \ref{proposition: continuity of cosine similarity} holds. Together with the assumption on $\delta$, we obtain
	\begin{equation*}
		|\varsigma(\tilde{D}_1, \tilde{D}_2)-\varsigma(D_1, D_2)| \leq c_1 \cdot W_2(D_1, \tilde{D}_1)+c_2 \cdot W_2(D_2, \tilde{D}_2)<\epsilon.
	\end{equation*}
	This shows the continuity of $\varsigma$.
\end{proof}

For completeness, we introduce cosine distance as counterpart to cosine similarity. We will use this notion for demonstration in Section \ref{section: data demo} as a direct comparison to other distances between persistence diagrams. We shall emphasize that the term itself is a misnomer since it is not a metric, but has been named this way in data science \cite{Tan}.

\begin{definition}
	For non-empty persistence diagrams $D_1$ and $D_2$, the \emph{(landscape) cosine distance} $\varsigma^*$ is defined as
	\begin{equation*}
		\varsigma^*(D_1,D_2) = 1 - \varsigma(D_1,D_2).
	\end{equation*}
\end{definition}

\begin{example}[Examples \ref{example: distance not accurate 1} and \ref{example: distance not accurate 2} revisited]
	We can verify that
	\begin{equation*}
		\varsigma(D_1, D_2)=\frac{n}{\sqrt{n(n+1)}}.
	\end{equation*}
	This value approaches $1$ as $n$ increases, which indicates that $D_1$ and $D_2$ become almost equal. This agrees with our direct observation on both diagrams.
	
	On the other hand, we calculate that $\varsigma(D_3,D_4)=0$ for any $m$ and $n$, which implies that $D_3$ and $D_4$ are perfectly dissimilar. This fits our previous explanation that their collections of points are far apart from one another and thus, both diagrams deserve to be considered entirely different.
\end{example}

\section{Orthogonality between Persistence Diagrams}\label{section: orthogonality}
According to cosine similarity, two persistence diagrams are perfectly dissimilar if they are orthogonal. We shall justify why the notion of orthogonality is suitable to describe the perfect dissimilarity between persistence diagrams.

\begin{definition}
	Persistence diagrams $D_1$ and $D_2$ are said to be \emph{orthogonal} if
	\begin{equation*}
		\langle \varphi(D_1), \varphi(D_2) \rangle = 0.
	\end{equation*}
\end{definition}

The above definition of orthogonality relies on the idea of persistence landscapes. We shall provide an equivalent definition which concentrates on persistence diagrams themselves. To avoid confusion below, the notation $(b,d)$ refers to a point in a diagram while $\left(b,d\right)_{\mathbb{R}}$ denotes an open interval of $\mathbb{R}$.

\begin{proposition}\label{proposition: orthogonal no intersection}
	The persistence diagrams $D_1$ and $D_2$ are orthogonal if and only if for any pair of points $(b_1, d_1) \in D_1$ and $(b_2, d_2) \in D_2$, the intervals $\left(b_1, d_1\right)_{\mathbb{R}}$ and $\left(b_2, d_2\right)_{\mathbb{R}}$ have an empty intersection.
\end{proposition}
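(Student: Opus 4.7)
The plan is to translate the zero-inner-product condition into a pointwise statement via non-negativity, and then relate the pointwise supports of the landscape functions to the open intervals $(b,d)_{\mathbb{R}}$ of the diagrams.

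First I would record the following elementary observations. For any feature $(b_i,d_i)$, the tent function $g_i$ satisfies $g_i(t) > 0$ if and only if $t \in (b_i,d_i)_{\mathbb{R}}$, and $g_i(t) \geq 0$ everywhere. Consequently each layer $\lambda_j$ of a landscape is non-negative, and moreover $\lambda_1(t) > 0$ if and only if $g_i(t) > 0$ for some feature, i.e.\ the support of $\lambda_1$ is exactly $\bigcup_i (b_i,d_i)_{\mathbb{R}}$. These are the only facts I need.

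For the $(\Leftarrow)$ direction, assume the open intervals associated to features of $D_1$ are pairwise disjoint from those of $D_2$. Let $\varphi(D_1) = (\lambda_1,\lambda_2,\ldots)$ and $\varphi(D_2) = (\mu_1,\mu_2,\ldots)$. Each $\lambda_j$ is supported in $\bigcup_{(b,d)\in D_1}(b,d)_{\mathbb{R}}$ (since $\lambda_j(t)\leq \lambda_1(t)$ would force it to vanish outside that union), and similarly for $\mu_j$. Under the disjointness hypothesis these two supports meet in a set of measure zero, so $\lambda_j(t)\mu_j(t)=0$ almost everywhere for every $j$. Summing the integrals gives $\langle \varphi(D_1),\varphi(D_2)\rangle = 0$.

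For the $(\Rightarrow)$ direction, suppose $\langle \varphi(D_1),\varphi(D_2)\rangle=0$. Since $\lambda_j(t)\mu_j(t)\geq 0$ pointwise, each integral in the sum defining the inner product must itself vanish, forcing $\lambda_j(t)\mu_j(t)=0$ almost everywhere; in particular $\lambda_1(t)\mu_1(t)=0$ a.e. Now argue by contradiction: if some $(b_1,d_1)\in D_1$ and $(b_2,d_2)\in D_2$ satisfy $(b_1,d_1)_{\mathbb{R}}\cap(b_2,d_2)_{\mathbb{R}} \neq \emptyset$, then this intersection is a non-empty open interval $I$. For $t\in I$ we have the associated tent functions $g_1^{D_1}(t)>0$ and $g_2^{D_2}(t)>0$, hence $\lambda_1(t)\geq g_1^{D_1}(t)>0$ and $\mu_1(t)\geq g_2^{D_2}(t)>0$, so $\lambda_1\mu_1 > 0$ on a set of positive Lebesgue measure, contradicting the almost-everywhere vanishing.

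The only potentially subtle step is the bookkeeping in the forward direction, where one must argue that all higher layers $\lambda_j,\mu_j$ inherit the support of $\lambda_1,\mu_1$; this is immediate from $0\leq \lambda_j\leq \lambda_1$ (and similarly for $\mu$), which follows from the definition of $\lambda_j$ as the $j$-th largest value in a list of non-negative numbers. Everything else is direct.
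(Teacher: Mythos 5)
Your proof is correct and follows essentially the same route as the paper: identify the support of each layer with the union of open intervals $(b,d)_{\mathbb{R}}$, use non-negativity to reduce orthogonality to the vanishing of the first-layer product, and conclude via disjointness of supports. The only cosmetic difference is that the paper invokes continuity of the layer functions to pass from a zero integral to identical vanishing, whereas you use the fact that a non-empty intersection of open intervals has positive Lebesgue measure; both are fine.
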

\begin{proof}
	For $i \in\{1,2\}$, define
	\begin{equation*}
		L_i=\bigcup_{(b, d) \in D_i}\left(b, d\right)_{\mathbb{R}}.
	\end{equation*}
	Denote $\varphi(D_i)_j$ as the $j$\textsuperscript{th} layer function of persistence landscape $\varphi(D_i)$. Denote also $\operatorname{supp}\left(\varphi(D_i)_j\right)$ as its support i.e. the set of values $t \in \mathbb{R}$ where $\varphi(D_i)_j(t)>0$. It is clear that
	\begin{equation*}
		\operatorname{supp}\left(\varphi(D_i)_1\right)=L_i \quad \textrm{and} \quad \operatorname{supp}\left(\varphi(D_i)_j\right) \subseteq L_i
	\end{equation*}	
	for $j>1$.
	
	Suppose that $D_1$ and $D_2$ are orthogonal. We have
	\begin{equation*}
		\int \varphi(D_1)_1(t) \cdot \varphi(D_2)_1(t) \ dt \leq\langle\varphi\left(D_1\right), \varphi\left(D_2\right)\rangle=0.
	\end{equation*}
	Each layer function above is continuous according to \cite{Bubenik_1}, and so is their product. Since the integral is zero, the function $\varphi(D_1)_1 \cdot \varphi(D_2)_2$ is zero everywhere. This shows that
	\begin{equation*}
		L_1 \cap L_2=\operatorname{supp}\left(\varphi(D_1)_1\right) \cdot \operatorname{supp}\left(\varphi(D_2)_1\right)=\operatorname{supp}\left(\varphi(D_1)_1 \cdot \varphi(D_2)_1\right)=\emptyset
	\end{equation*}
	as desired.
	
	Conversely, suppose that $L_1 \cap L_2=\emptyset$. Note that
	\begin{equation*}
		\operatorname{supp}\left(\varphi(D_1)_j \cdot \varphi(D_2)_j\right)=\operatorname{supp}\left(\varphi(D_1)_j\right) \cdot \operatorname{supp}\left(\varphi(D_2)_j\right) \subseteq L_1 \cap L_2=\emptyset
	\end{equation*}
	for any $j$. This implies that $\varphi(D_1)_j \cdot \varphi(D_2)_j$ is a zero function and hence, we conclude that $\langle\varphi(D_1), \varphi(D_2)\rangle=0$.
\end{proof}

\begin{corollary}
	If persistence diagrams $D_1$ and $D_2$ are orthogonal, then
	\begin{equation*}
		\|\varphi(D_1)-\varphi(D_2)\|_{\infty}=\sup \big\{\|\varphi(D_1)\|_{\infty},\|\varphi(D_2)\|_{\infty}\big\}
	\end{equation*}
	and
	\begin{equation*}
		{\|\varphi(D_1)-\varphi(D_2)\|_p}^p={\|\varphi(D_1)\|_p}^p + {\|\varphi(D_2)\|_p}^p.
	\end{equation*}
\end{corollary}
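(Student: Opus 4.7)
My plan is to reduce the entire statement to the pointwise identity $\varphi(D_1)_j(t)\cdot\varphi(D_2)_j(t)=0$ for every $j\geq1$ and every $t\in\mathbb{R}$. This identity is already essentially in hand from the proof of Proposition \ref{proposition: orthogonal no intersection}: orthogonality gives $L_1\cap L_2=\emptyset$, and the support containment $\operatorname{supp}(\varphi(D_i)_j)\subseteq L_i$ established there forces the supports of the two $j$\textsuperscript{th} layer functions to be disjoint for every $j$.

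The key additional observation I would record is that each layer function is non-negative, being a $j$\textsuperscript{th} maximum of the non-negative functions $g_i$. Combining non-negativity with disjoint supports, at every $t$ at most one of $\varphi(D_1)_j(t)$ and $\varphi(D_2)_j(t)$ is strictly positive while the other vanishes. Hence the elementary pointwise identities
\begin{equation*}
	|\varphi(D_1)_j(t) - \varphi(D_2)_j(t)| = \max\{\varphi(D_1)_j(t),\,\varphi(D_2)_j(t)\}
\end{equation*}
and
\begin{equation*}
	|\varphi(D_1)_j(t) - \varphi(D_2)_j(t)|^p = \varphi(D_1)_j(t)^p + \varphi(D_2)_j(t)^p
\end{equation*}
hold for all $j$ and $t$.

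From here the two claims of the corollary follow mechanically. For the supremum norm, taking $\sup_{j,t}$ on the first identity lets the supremum pass through the pointwise maximum to yield $\sup\{\|\varphi(D_1)\|_\infty,\|\varphi(D_2)\|_\infty\}$. For the $p$-norm, integrating the second identity in $t$ and summing over $j$, together with linearity of the integral and the sum, delivers ${\|\varphi(D_1)\|_p}^p+{\|\varphi(D_2)\|_p}^p$.

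I do not anticipate a serious obstacle. The only minor technicality is that $L_i$ is a union of \emph{open} intervals, so the product identity $\varphi(D_1)_j\cdot\varphi(D_2)_j=0$ could in principle be questioned at interval endpoints; however, each $g_i$ vanishes at its own birth and death by construction, so the layer functions also vanish there, and the identity extends trivially to all of $\mathbb{R}$. Beyond that, the argument is a direct computation leveraging the disjoint-support observation.
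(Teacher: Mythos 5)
Your proposal is correct and takes essentially the same route as the paper, which simply asserts the corollary follows from the definitions of the norms and Proposition \ref{proposition: orthogonal no intersection}; you have merely filled in the details (disjoint supports of the layer functions, their non-negativity, and the resulting pointwise identities), all of which check out. No issues.
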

\begin{proof}
	This is straightforward by definitions of the norms and also the implication of Proposition \ref{proposition: orthogonal no intersection}.
\end{proof}

The previous proposition gives an intuitive meaning of orthogonality for persistence diagrams from the view of their simplicial filtrations. Given two orthogonal diagrams, whenever a feature exists during a certain range of filtration parameter for a diagram, there is none for another diagram in the same range. Due to this behavior, their ground simplicial complexes can be seen as a clear contrast to each other, and thus the same can be said to the persistence diagrams as well. This is indeed a sound reason on why the orthogonality symbolizes a perfect dissimilarity between diagrams, though we have a second justification below.

Recall the trivial matching based on the definitions of bottleneck and Wasserstein distances in Section \ref{section: persistent homology}. In general, the bijection is considered as a bad pairing of points between two persistence diagrams. It disregards the pairs of points in these diagrams who are closer to each other compared to points on the diagonal line. It is commonly expected that trivial matching is not perfect. However, it is indeed a perfect matching if the diagrams are orthogonal.

\begin{proposition}\label{proposition: orthogonality trivial perfect}
	If persistence diagrams $D_1$ and $D_2$ are orthogonal, then their trivial matching is perfect under the bottleneck and Wasserstein distances.
\end{proposition}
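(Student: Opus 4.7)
The plan is to exploit Proposition \ref{proposition: orthogonal no intersection}, which recasts orthogonality as disjointness of the open intervals $(b,d)_{\mathbb{R}}$ arising from $D_1$ versus those from $D_2$. Under the trivial matching, each point $(b,d) \in D_1 \cup D_2$ is sent to its midpoint $\bigl((b+d)/2,(b+d)/2\bigr)$ at $\infty$-cost exactly $(d-b)/2$. I want to show that no bijection $\gamma \in \Gamma$ can undercut this, both in the supremum sense (for $W_\infty$) and in the $p$-th power sum sense (for $W_p$).

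First I would record the key lower bound on the distance between a cross-diagram pair. Given $(b_1,d_1) \in D_1$ and $(b_2,d_2) \in D_2$, Proposition \ref{proposition: orthogonal no intersection} forces their open intervals to be disjoint, so without loss of generality $d_1 \leq b_2$. Then $b_2 - b_1 \geq d_1 - b_1$ and $d_2 - d_1 \geq d_2 - b_2$, giving
\begin{equation*}
\bigl\|(b_1,d_1)-(b_2,d_2)\bigr\|_\infty \;\geq\; \max(b_2-b_1,\; d_2-d_1) \;\geq\; \max(d_1-b_1,\; d_2-b_2).
\end{equation*}
Separately, matching any $(b,d)$ to a diagonal point $(t,t)$ costs $\max(|b-t|,|d-t|) \geq (d-b)/2$, with equality only at the midpoint.

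For the bottleneck distance, set $M := \max\{(d-b)/2 : (b,d) \in D_1 \cup D_2\}$, which is the supremum cost of the trivial matching. Pick any point $(b^\star, d^\star)$ attaining $M$. Under an arbitrary $\gamma \in \Gamma$, this point is either matched to a diagonal element (cost $\geq (d^\star-b^\star)/2 = M$) or to a point of the opposite diagram (cost $\geq d^\star - b^\star \geq M$ by the estimate above). Hence every $\gamma$ has supremum cost at least $M$, so the trivial matching attains $W_\infty(D_1,D_2)$.

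For the $p$-Wasserstein distance I would decompose an arbitrary $\gamma$ into a partial bijection $\pi : B_1 \to B_2$ between subsets $B_i \subseteq D_i$ (the cross-diagram real pairs) together with the remaining real points of $D_1 \cup D_2$ matched to diagonal elements, plus any purely diagonal pairs (which contribute nonnegatively). The crux is the elementary inequality
\begin{equation*}
\max(x,y)^p \;\geq\; (x/2)^p + (y/2)^p \qquad \text{for } x,y \geq 0,\; p \geq 1,
\end{equation*}
which follows since the right side is at most $2\bigl(\max(x,y)/2\bigr)^p = \max(x,y)^p/2^{p-1} \leq \max(x,y)^p$. Applying this to each cross pair and the diagonal bound to each leftover real point yields
\begin{equation*}
\sum_{x \in D_1^\Delta} \|x-\gamma(x)\|_\infty^p \;\geq\; \sum_{(b,d) \in D_1 \cup D_2} \bigl((d-b)/2\bigr)^p,
\end{equation*}
which is precisely the trivial matching's $p$-th power cost. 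The main obstacle is ensuring this bookkeeping is airtight, namely that every real point of $D_1 \cup D_2$ appears on the right exactly once regardless of how $\gamma$ partitions the matching; once one observes that the inequality cleanly splits each cross-pair contribution into two half-lifespan terms (one per diagram), the accounting becomes automatic.
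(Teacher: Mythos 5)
Your argument is correct and follows essentially the same route as the paper's: invoke Proposition \ref{proposition: orthogonal no intersection} to reduce to $d_1 \leq b_2$ for any cross-diagram pair, establish the per-pair inequalities showing that splitting such a pair into its two diagonal (midpoint) matches never increases the cost in either the supremum or the $p$-th power sense, and then compare an arbitrary bijection to the trivial matching. Your bottleneck argument via the point attaining the maximal half-lifespan is a slight streamlining of the paper's pointwise comparison, but the underlying estimates are identical.
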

\begin{proof}
	This clearly holds if either $D_1$ and $D_2$ is empty. We assume that both diagrams are non-empty. For ease of notation, define
	\begin{equation*}
		\omega_{\infty}(\gamma)=\sup _{x \in D_1^{\Delta}}\|x-\gamma(x)\|_{\infty} \quad \textrm{and} \quad \omega_p(\gamma)=\sum_{x \in D_1^{\Delta}}{\|x-\gamma(x)\|_\infty}^p
	\end{equation*}
	for a bijection $\gamma: D_1^{\Delta} \rightarrow D_2^{\Delta}$ in the definitions of bottleneck and Wasserstein distances. Denote $\kappa$ as the trivial matching. For points $x=(b_1, d_1) \in D_1$ and $y=(b_2, d_2) \in D_2$, their paired points on the diagonal line $\Delta$ are
	\begin{equation*}
		\kappa(x)=\left(\frac{b_1+d_1}{2}, \frac{b_1+d_1}{2}\right) \quad \textrm{and} \quad \kappa^{-1}(y)=\left(\frac{b_2+d_2}{2}, \frac{b_2+d_2}{2}\right).
	\end{equation*}
	Note that the points above are the closest to $x$ and $y$ respectively among points on $\Delta$ with respect to supremum norm in $\mathbb{R}^2$.
	
	Suppose that $D_1$ and $D_2$ are orthogonal. We claim that
	\begin{equation}\label{equation: inequality sup norm}
		\sup \big\{\|x-\kappa(x)\|_{\infty},\|y-\kappa^{-1}(y)\|_{\infty}\big\} \leq\|x-y\|_{\infty}
	\end{equation}
	and
	\begin{equation}\label{equation: inequality p-norm}
		{\|x-\kappa(x)\|_\infty}^p+{\|y-\kappa^{-1}(y)\|_\infty}^p \leq {\|x-y\|_\infty}^p.
	\end{equation}
	Proposition \ref{proposition: orthogonal no intersection} states that $\left(b_1, d_1\right)_{\mathbb{R}} \cap\left(b_2, d_2\right)_{\mathbb{R}}=\emptyset$. This implies that either $d_1 \leq b_2$ or $d_2 \leq b_1$. Without loss of generality, we assume the former. The above inequalities are proved respectively as follows:
	\begin{align*}
		\sup \left\{\frac{d_1-b_1}{2}, \frac{d_2-b_2}{2}\right\} & <\sup \left\{d_1-b_1, d_2-b_2\right\} \\
		& \leq \sup \left\{b_2-b_1, d_2-d_1\right\}
	\end{align*}
	and
	\begin{align*}
		2\left(\frac{d_1-b_1}{2}\right)^p+2\left(\frac{d_2-b_2}{2}\right)^p & \leq\left(d_1-b_1\right)^p+\left(d_2-b_2\right)^p \\
		& \leq\left(b_2-b_1\right)^p+\left(d_2-d_1\right)^p\\
		& \leq 2 \cdot \sup\left\{\left(b_2-b_1\right)^p, \left(d_2-d_1\right)^p\right\}.
	\end{align*}
	Consider a bijection $\gamma: D_1^{\Delta} \rightarrow D_2^{\Delta}$. For a point $x \in D_1$, note that if $\gamma(x) \in \Delta$, then
	\begin{equation*}
		\|x-\kappa(x)\|_{\infty} \leq\|x-\gamma(x)\|_{\infty}
	\end{equation*}
	since $\kappa(x)$ is the closest to $x$ among points on $\Delta$. If $\gamma(x)=y \in D_2$, then the inequalities (\ref{equation: inequality sup norm}) and (\ref{equation: inequality p-norm}) hold. These findings conclude that
	\begin{equation*}
		\omega_{\infty}(\kappa) \leq \omega_{\infty}(\gamma) \quad \text { and } \quad \omega_p(\kappa) \leq \omega_p(\gamma)
	\end{equation*}
	for any bijection $\gamma$, and thus $\kappa$ is a perfect matching.
\end{proof}

\begin{corollary}
	If persistence diagrams $D_1$ and $D_2$ are orthogonal, then
	\begin{equation*}
		W_{\infty}(D_1, D_2)=\sup \big\{W_{\infty}(D_1, \emptyset), W_{\infty}(D_2, \emptyset)\big\}
	\end{equation*}
	and
	\begin{equation*}
		{W_p(D_1, D_2)}^p={W_p(D_1, \emptyset)}^p+{W_p(D_2, \emptyset)}^p.
	\end{equation*}
\end{corollary}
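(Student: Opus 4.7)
The plan is to leverage Proposition \ref{proposition: orthogonality trivial perfect} directly: since the trivial matching $\kappa$ is already known to realize the infimum in the definitions of $W_\infty(D_1, D_2)$ and $W_p(D_1, D_2)$ under orthogonality, the distances reduce to explicit sums and suprema over the contributions $\|x - \kappa(x)\|_\infty$. I will compute these contributions and match them to $W_\infty(D_i, \emptyset)$ and $W_p(D_i, \emptyset)$.

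First I would handle the cases where one or both diagrams are empty as a trivial check, then assume both are non-empty. Recall that the trivial matching pairs every $(b_1, d_1) \in D_1$ with its closest diagonal point $\bigl((b_1+d_1)/2, (b_1+d_1)/2\bigr)$ at supremum distance $(d_1 - b_1)/2$, and symmetrically pairs each $(b_2, d_2) \in D_2$ with a diagonal point at distance $(d_2 - b_2)/2$; any remaining diagonal-to-diagonal pairings contribute distance $0$. Therefore
\begin{equation*}
    \omega_\infty(\kappa) = \sup\!\left\{\sup_{(b,d)\in D_1}\frac{d-b}{2},\ \sup_{(b,d)\in D_2}\frac{d-b}{2}\right\}
\end{equation*}
and
\begin{equation*}
    \omega_p(\kappa) = \sum_{(b,d)\in D_1}\!\left(\frac{d-b}{2}\right)^{\!p} + \sum_{(b,d)\in D_2}\!\left(\frac{d-b}{2}\right)^{\!p}.
\end{equation*}

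Next I would identify each one-diagram quantity. For $W_\infty(D_i, \emptyset)$ and $W_p(D_i, \emptyset)$, every admissible bijection must send each point of $D_i$ to some point of $\Delta$, and the trivial matching sending $(b,d) \mapsto \bigl((b+d)/2, (b+d)/2\bigr)$ minimizes the supremum distance pointwise (since this is the nearest diagonal point in $\|\cdot\|_\infty$); thus it is perfect for both the bottleneck and Wasserstein infima. This yields
\begin{equation*}
    W_\infty(D_i, \emptyset) = \sup_{(b,d)\in D_i}\frac{d-b}{2}, \qquad W_p(D_i, \emptyset)^p = \sum_{(b,d)\in D_i}\!\left(\frac{d-b}{2}\right)^{\!p}.
\end{equation*}

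Finally, combining these identities with Proposition \ref{proposition: orthogonality trivial perfect}, which gives $W_\infty(D_1, D_2) = \omega_\infty(\kappa)$ and $W_p(D_1, D_2)^p = \omega_p(\kappa)$ for orthogonal $D_1, D_2$, yields both asserted equalities. I do not anticipate a serious obstacle here, as the argument is entirely bookkeeping once the previous proposition is invoked; the only subtlety worth flagging is confirming that the trivial matching is in fact perfect against the empty diagram, which follows from the pointwise optimality of nearest-diagonal projection.
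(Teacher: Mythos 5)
Your proposal is correct and follows exactly the route the paper intends: the paper's proof is the one-line remark that the identities are ``straightforward by definitions of the distances and also the implication of Proposition \ref{proposition: orthogonality trivial perfect},'' and your computation of $\omega_\infty(\kappa)$, $\omega_p(\kappa)$, and the one-diagram distances $W_\infty(D_i,\emptyset)$, $W_p(D_i,\emptyset)$ is precisely the bookkeeping that remark leaves to the reader.
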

\begin{proof}
	This is straightforward by definitions of the distances and also the implication of Proposition \ref{proposition: orthogonality trivial perfect}.
\end{proof}

The last proposition provides an intuitive meaning for orthogonality of persistence diagrams with respect to the distributions of their points on the same plot. The points in a diagram are sufficiently far apart from those in another one to avoid better pairing than the trivial matching. It can be thought that the diagrams occupy separate regions on the plot, and they deserve to be considered entirely different. This provides another justification on why orthogonality signifies a perfect dissimilarity between persistence diagrams.

As a remark, although two orthogonal diagrams are deemed entirely different based on both justifications above, it disagrees with interpretation of distances. Recall that a distance indicates degree of difference. So, it is expected that two orthogonal diagrams have a significantly large distance. However, they can record a very small distance, as illustrated in Example \ref{example: distance not accurate 2}. 

\section{Data Demonstration}\label{section: data demo}

We shall demonstrate the accuracy of cosine similarity against the standard distances to indicate similarity between persistence diagrams. Here, the cosine distance is used instead of cosine similarity to match the narrative with other distances. We will show that the cosine distance is able to distinguish clearly whether two diagrams are closely similar or dissimilar.

\subsection{Data Preparation}

To prepare persistence diagrams in this demonstration, several data sets are sampled from certain regions or shapes in $\mathbb{R}^2$. The sets are listed as follows:
\begin{enumerate}[(i)]
\item $Q$ and $Q'$ from the unit disc $\mathcal{Q}=\left\{x \in \mathbb{R}^2 \mid\|x\|_2 \leq 1\right\}$,
\item $R$ and $R'$ from an annulus $\mathcal{R}=\left\{x \in \mathbb{R}^2 \mid 0.5 \leq\|x\|_2 \leq 1\right\}$, and
\item $S$ and $S'$ from the unit circle $\mathcal{S}=\left\{x \in \mathbb{R}^2 \mid\|x\|_2=1\right\}$.
\end{enumerate}

Each data set contains $3000$ points which are chosen randomly and uniformly from the designated shape. Persistence diagrams for the sets are obtained via Vietoris-Rips filtration. We observe features in zeroth and first homology groups for these diagrams, which are denoted as $\mathsf{H}_0$ and $\mathsf{H}_1$, respectively. The diagrams are then compared against each other via the bottleneck distance $W_{\infty}$, Wasserstein distance $W_2$, distances defined by the norms $\lVert\cdot\rVert_{\infty}$ and $\lVert\cdot\rVert_2$ for persistence landscapes, and lastly, cosine distance $\varsigma^*$. The persistent homology method is carried out via a Python package called giotto-tda \cite{Tauzin}.

For each data set, its points are selected in the above manner so that the filtration is able to capture the topological structure of the shape accurately. Given a parameter value, its Vietoris-Rips complex can be regarded as a reconstruction of the said shape from the data set. Specifically, each $1$-simplex in this complex creates an edge between two points, while each $2$-simplex forms a triangular region enclosed by three points in the data set. The union of these edges and triangles approximates the original shape. It consists of connected components and holes which are specified by persistent diagrams with respect to $\mathsf{H}_0$ and $\mathsf{H}_1$. Any geometrical feature in each shape, such as holes of $\mathcal{R}$ and $\mathcal{S}$, will be presented in the diagrams of corresponding data sets.

\subsection{Objectives}
In this demonstration, we shall observe the following:
\begin{enumerate}[(i)]
	\item distances between persistence diagrams with respect to $\mathsf{H}_0$ and $\mathsf{H}_1$ for three pairs $Q$ and $Q'$, $R$ and $R'$, and $S$ and $S'$, and
	\item distances between persistence diagrams with respect to $\mathsf{H}_1$ among $Q$, $R$ and $S$.
\end{enumerate}
Specifically, we examine whether each distance can convey the similarity between those diagrams clearly. The objectives are decided this way because in theory, we have prior knowledge on whether those diagrams are closely similar or very dissimilar. So, we can compare this theoretical information with the results obtained in this demonstration with confidence.

For the first objective, two data sets are set up for each shape to demonstrate close similarity between their persistence diagrams. Since both are sampled from the same shape randomly and uniformly, their diagrams must have a slight difference only. So, their distances shall be small, including the cosine distance.

The second objective is done to verify extent of similarity and dissimilarity between persistence diagrams with respect to $\mathsf{H}_1$ for the relevant data sets. Recall that features in $\mathsf{H}_1$ relate to holes formed during a simplicial filtration. Each of the shapes $\mathcal{R}$ and $\mathcal{S}$ has a hole, unlike $\mathcal{Q}$. This hole will be recorded as a persistent feature in the diagrams with respect to $\mathsf{H}_1$ for data sets $R$ and $S$. Such feature will be absent in the diagram for $Q$. Thus, the diagram with respect to $\mathsf{H}_1$ for $R$ shall be similar to that for $S$, but dissimilar to that for $Q$. The cosine distance is expected to address this similarity or dissimilarity clearly in comparison to other distances.

\subsection{Results}

Figures \ref{figure: persistence diagrams Q} to \ref{figure: persistence diagrams S} show the persistence diagrams for the data sets. The distances between relevant diagrams are summarized in Tables \ref{table: same shape} to \ref{table: cosine distance}.

\begin{figure}[H]
	\centering
	\includegraphics[scale=0.7]{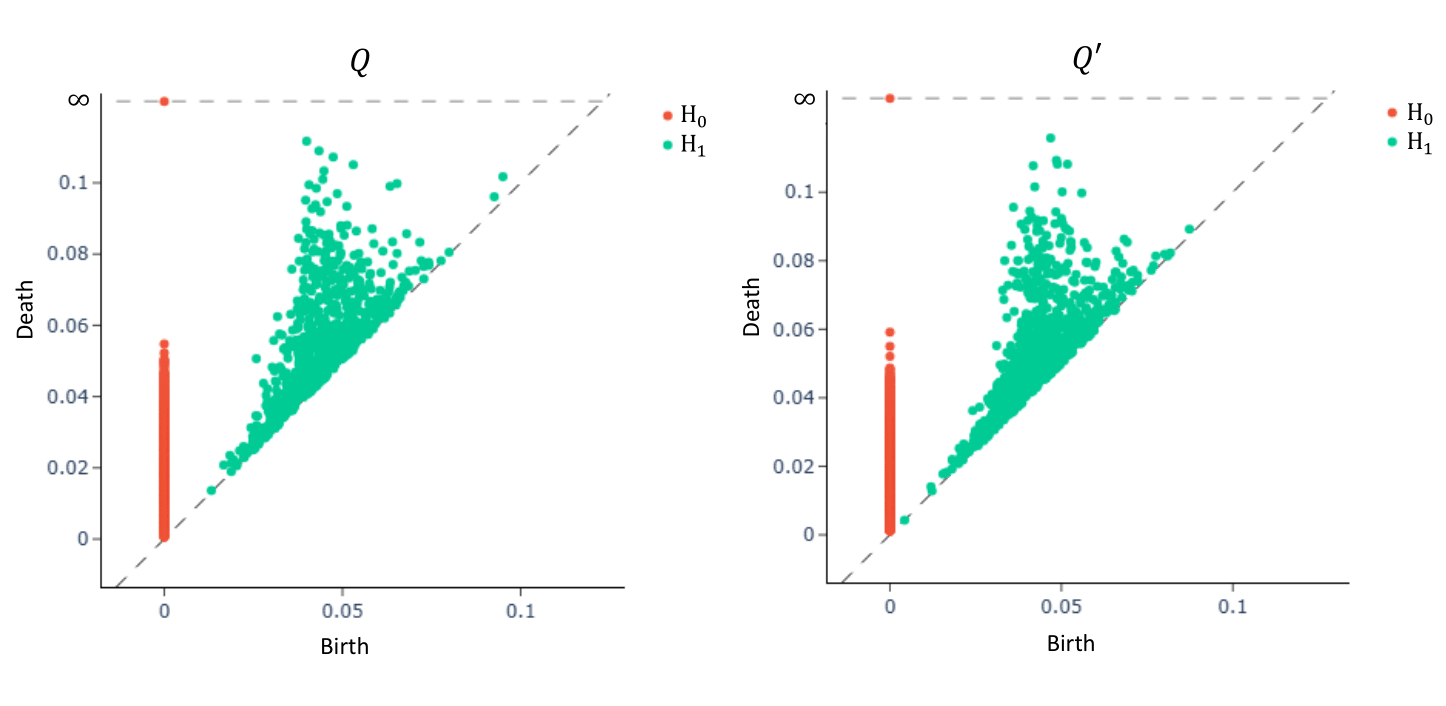}
	\caption{Persistence diagrams for data sets $Q$ and $Q'$}
	\label{figure: persistence diagrams Q}
\end{figure}
\begin{figure}[H]
	\centering
	\includegraphics[scale=0.7]{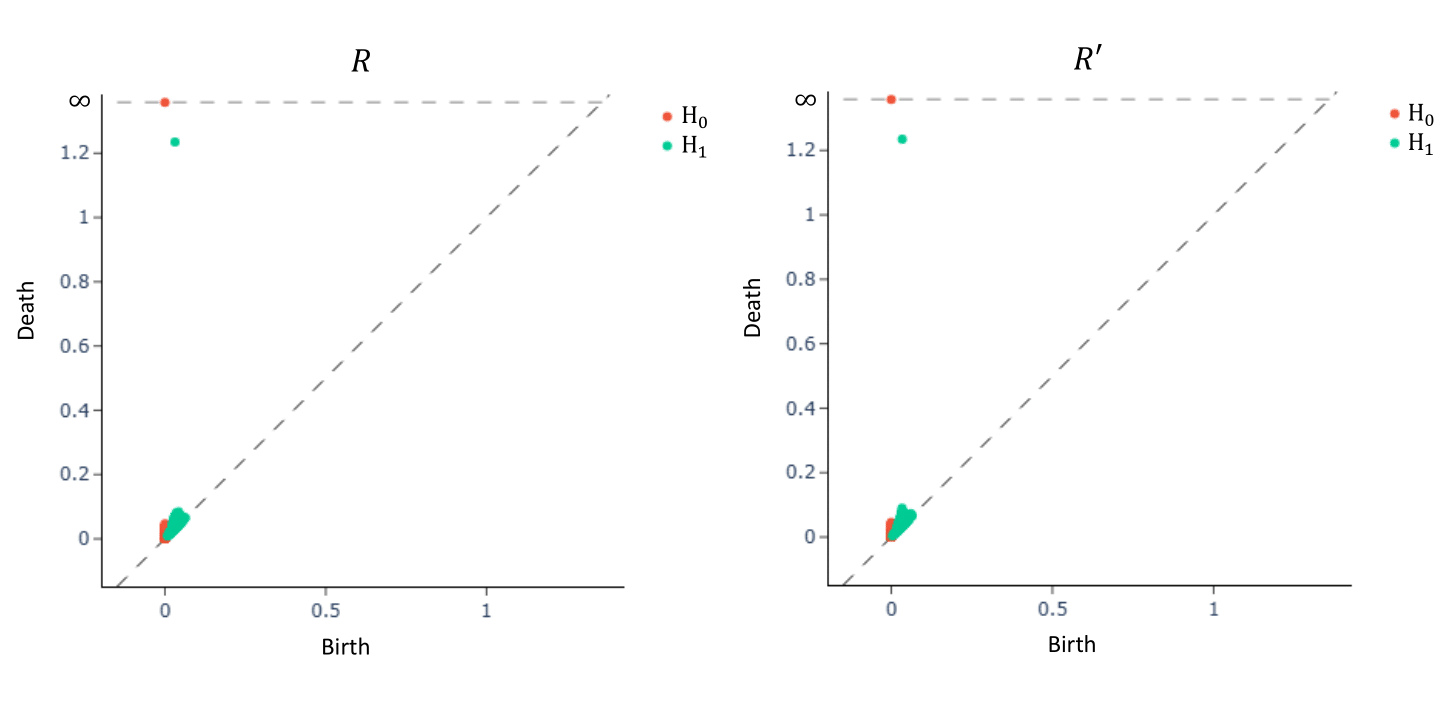}
	\caption{Persistence diagrams for data sets $R$ and $R'$}
	\label{figure: persistence diagrams R}
\end{figure}
\begin{figure}[H]
	\centering
	\includegraphics[scale=0.7]{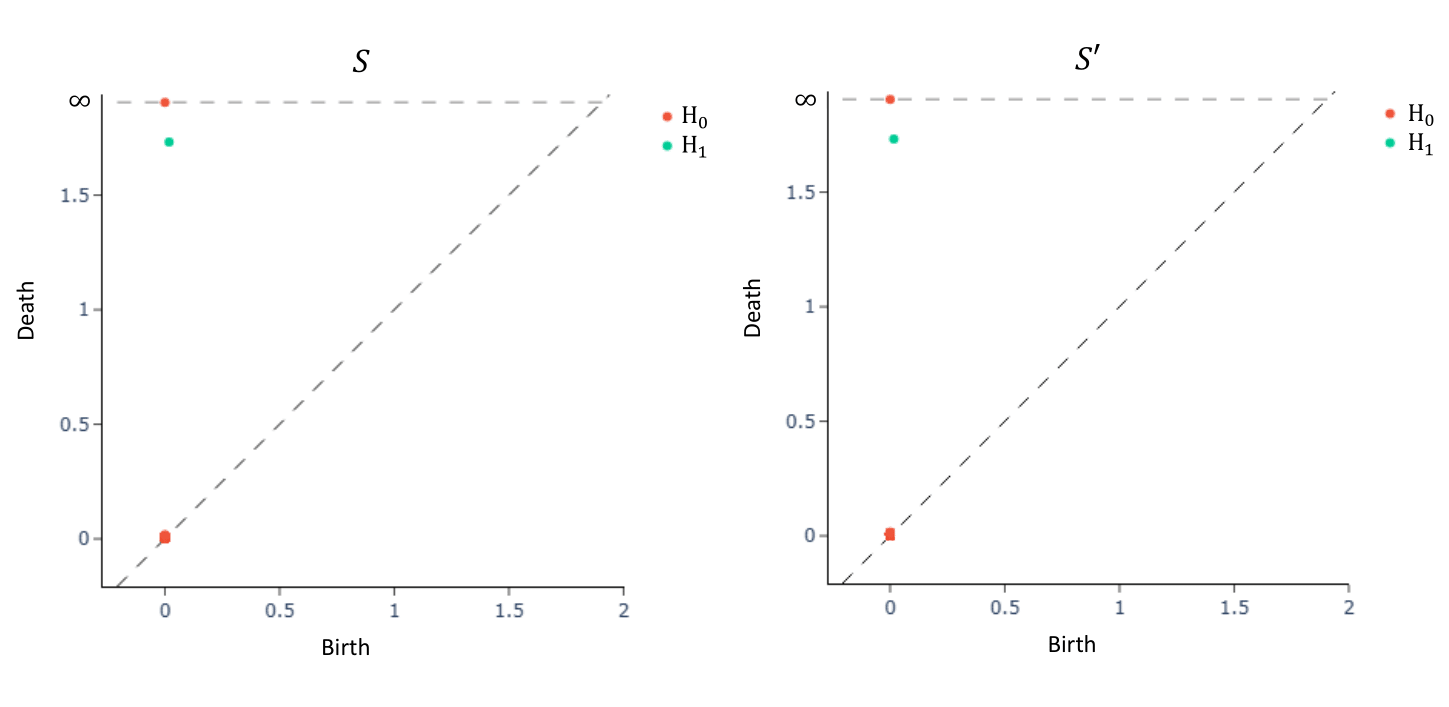}
	\caption{Persistence diagrams for data sets $S$ and $S'$}
	\label{figure: persistence diagrams S}
\end{figure}
\newpage
\begin{table}[h]
	\centering
	\caption{Distances between persistence diagrams for two data sets from the same shape}
	\begin{tabular}{@{}clllll@{}}
		\multicolumn{6}{c}{$\mathsf{H}_0$}                                                                                                                                                       \\ \toprule
		Data sets    & \multicolumn{1}{c}{$W_\infty$} & \multicolumn{1}{c}{$W_2$} & \multicolumn{1}{c}{$\|\cdot\|_\infty$} & \multicolumn{1}{c}{$\|\cdot\|_2$} & \multicolumn{1}{c}{$\varsigma^*$} \\ \midrule
		$Q$ and $Q'$ & 0.004360                       & 0.015458                  & 0.004360                               & 0.000722                          & 0.010417                        \\
		$R$ and $R'$ & 0.001629                       & 0.013998                  & 0.001629                               & 0.000245                          & 0.002320                        \\
		$S$ and $S'$ & 0.001921                       & 0.006393                  & 0.001921                               & 0.000165                          & 0.020274                        \\ \bottomrule
		\multicolumn{6}{c}{} 
	\end{tabular}
	\begin{tabular}{@{}clllll@{}}
		\multicolumn{6}{c}{$\mathsf{H}_1$}                                                                                                                                                       \\ \toprule
		Data sets    & \multicolumn{1}{c}{$W_\infty$} & \multicolumn{1}{c}{$W_2$} & \multicolumn{1}{c}{$\|\cdot\|_\infty$} & \multicolumn{1}{c}{$\|\cdot\|_2$} & \multicolumn{1}{c}{$\varsigma^*$} \\ \midrule
		$Q$ and $Q'$ & 0.010991                       & 0.041436                  & 0.005256                               & 0.001085                          & 0.014863                        \\
		$R$ and $R'$ & 0.008980                       & 0.034152                  & 0.008365                               & 0.002123                          & 0.000009                        \\
		$S$ and $S'$ & 0.002066                       & 0.002066                  & 0.002066                               & 0.001911                          & 0.000003                        \\ \bottomrule
	\end{tabular}
	\label{table: same shape}
\end{table}

\begin{table}[h]
	\centering
	\caption{Bottleneck distance between persistence diagrams for the data sets}
	\begin{tabular}{@{}ll@{}}
		\begin{tabular}{@{}clll@{}}
			\multicolumn{4}{c}{$\mathsf{H}_0$}                                                \\ \toprule
			& \multicolumn{1}{c}{$Q$} & \multicolumn{1}{c}{$R$} & \multicolumn{1}{c}{$S$} \\ \midrule
			$Q$ & 0                       & 0.013372                & 0.027399                \\
			$R$ & 0.013372                & 0                       & 0.023379                \\
			$S$ & 0.027399                & 0.023379                & 0                       \\ \bottomrule
		\end{tabular}	& 
		\begin{tabular}{@{}clll@{}}
			\multicolumn{4}{c}{$\mathsf{H}_1$}                                                \\ \toprule
			& \multicolumn{1}{c}{$Q$} & \multicolumn{1}{c}{$R$} & \multicolumn{1}{c}{$S$} \\ \midrule
			$Q$ & 0                       & 0.601816                & 0.856912                \\
			$R$ & 0.601816                & 0                       & 0.497006                \\
			$S$ & 0.856912                & 0.497006                & 0                       \\ \bottomrule
		\end{tabular}
	\end{tabular}
	\label{table: bottleneck distance}
\end{table}

\begin{table}[h]
	\centering
	\caption{$2$-Wasserstein distance between persistence diagrams for the data sets}
	\begin{tabular}{@{}ll@{}}
		\begin{tabular}{@{}clll@{}}
			\multicolumn{4}{c}{$\mathsf{H}_0$}                                                \\ \toprule
			& \multicolumn{1}{c}{$Q$} & \multicolumn{1}{c}{$R$} & \multicolumn{1}{c}{$S$} \\ \midrule
			$Q$ & 0                       & 0.352694                & 0.632645                \\
			$R$ & 0.352694                & 0                       & 0.452928                \\
			$S$ & 0.632645                & 0.452928                & 0                       \\ \bottomrule
		\end{tabular}
		&
		\begin{tabular}{@{}clll@{}}
			\multicolumn{4}{c}{$\mathsf{H}_1$}                                                \\ \toprule
			& \multicolumn{1}{c}{$Q$} & \multicolumn{1}{c}{$R$} & \multicolumn{1}{c}{$S$} \\ \midrule
			$Q$ & 0                       & 0.635436                & 0.887606                \\
			$R$ & 0.635436                & 0                       & 0.518727                \\
			$S$ & 0.887606                & 0.518727                & 0                       \\ \bottomrule
		\end{tabular}
	\end{tabular}
	\label{table: Wasserstein distance}
\end{table}
\newpage
\begin{table}[h]
	\centering
	\caption{Distance defined by supremum norm between persistence landscapes for the data sets}
	\begin{tabular}{@{}ll@{}}
		\begin{tabular}{@{}clll@{}}
			\multicolumn{4}{c}{$\mathsf{H}_0$}                                                \\ \toprule
			& \multicolumn{1}{c}{$Q$} & \multicolumn{1}{c}{$R$} & \multicolumn{1}{c}{$S$} \\ \midrule
			$Q$ & 0                       & 0.013372                & 0.027394                \\
			$R$ & 0.013372                & 0                       & 0.023376                \\
			$S$ & 0.027394                & 0.023376                & 0                       \\ \bottomrule
		\end{tabular}
		&
		\begin{tabular}{@{}clll@{}}
			\multicolumn{4}{c}{$\mathsf{H}_1$}                                                \\ \toprule
			& \multicolumn{1}{c}{$Q$} & \multicolumn{1}{c}{$R$} & \multicolumn{1}{c}{$S$} \\ \midrule
			$Q$ & 0                       & 0.601763                & 0.856864                \\
			$R$ & 0.601763                & 0                       & 0.497006                \\
			$S$ & 0.856864                & 0.497006                & 0                       \\ \bottomrule
		\end{tabular}
	\end{tabular}
	\label{table: supremum distance}
\end{table}

\begin{table}[h]
	\centering
	\caption{Distance defined by $2$-norm between persistence landscapes for the data sets}
	\begin{tabular}{@{}ll@{}}
		\begin{tabular}{@{}clll@{}}
			\multicolumn{4}{c}{$\mathsf{H}_0$}                                                \\ \toprule
			& \multicolumn{1}{c}{$Q$} & \multicolumn{1}{c}{$R$} & \multicolumn{1}{c}{$S$} \\ \midrule
			$Q$ & 0                       & 0.001230                & 0.003578                \\
			$R$ & 0.001230                & 0                       & 0.002756                \\
			$S$ & 0.003578                & 0.002756                & 0                       \\ \bottomrule
		\end{tabular}
		&
		\begin{tabular}{@{}clll@{}}
			\multicolumn{4}{c}{$\mathsf{H}_1$}                                                \\ \toprule
			& \multicolumn{1}{c}{$Q$} & \multicolumn{1}{c}{$R$} & \multicolumn{1}{c}{$S$} \\ \midrule
			$Q$ & 0                       & 0.381141                & 0.647616                \\
			$R$ & 0.381141                & 0                       & 0.387817                \\
			$S$ & 0.647616                & 0.387817                & 0                       \\ \bottomrule
		\end{tabular}
	\end{tabular}
	\label{table: norm distance}
\end{table}

\begin{table}[h]
	\centering
	\caption{Cosine distance between persistence diagrams for the data sets}
	\begin{tabular}{@{}ll@{}}
		\begin{tabular}{@{}clll@{}}
			\multicolumn{4}{c}{$\mathsf{H}_0$}                                                \\ \toprule
			& \multicolumn{1}{c}{$Q$} & \multicolumn{1}{c}{$R$} & \multicolumn{1}{c}{$S$} \\ \midrule
			$Q$ & 0                       & 0.041416                & 0.725444                \\
			$R$ & 0.041416                & 0                       & 0.651488                \\
			$S$ & 0.725444                & 0.651488                & 0                       \\ \bottomrule
		\end{tabular}
		&
		\begin{tabular}{@{}clll@{}}
			\multicolumn{4}{c}{$\mathsf{H}_1$}                                                \\ \toprule
			& \multicolumn{1}{c}{$Q$} & \multicolumn{1}{c}{$R$} & \multicolumn{1}{c}{$S$} \\ \midrule
			$Q$ & 0                       & 0.973207                & 0.979322                \\
			$R$ & 0.973207                & 0                       & 0.160770                \\
			$S$ & 0.979322                & 0.160770                & 0                       \\ \bottomrule
		\end{tabular}
	\end{tabular}
	\label{table: cosine distance}
\end{table}

\subsection{Discussion}
In Figures \ref{figure: persistence diagrams Q} to \ref{figure: persistence diagrams S}, each persistence diagram with respect to $\mathsf{H}_0$ has exactly one feature that never dies, as explained in Section \ref{section: persistent homology}. It is omitted in the calculations for distances. Notice that there is a persistent feature in the diagram for $R$ with respect to $\mathsf{H}_1$, which reflects the hole of annulus $\mathcal{R}$. Such feature is also observed for $R'$, $S$ and $S'$ due to a similar reason.

The persistence diagrams for $Q$ and $Q'$ in Figure \ref{figure: persistence diagrams Q} seem to be closely similar. This is indeed confirmed by all distances in Table \ref{table: same shape}, where they are noticeably small. This is expected since the data sets originate from the same shape $\mathcal{Q}$. This is true for other pairs of data sets as well. From this finding, the cosine distance is able to indicate close similarity between two persistence diagrams by recording a small value. However, this is nothing special since other distances conclude the same too. It is worth noting that the performance of cosine distance against other distances varies throughout the pairs of data sets. For example in the pair $S$ and $S'$, the cosine distance records a higher value than other distances for $\mathsf{H_0}$, but it is the opposite for $\mathsf{H_1}$. Nonetheless, the values given by the cosine distance are small enough to ascertain the close similarity between diagrams for each pair of data sets.

Next, we examine the persistence diagrams with respect to $\mathsf{H}_1$ for the data sets $Q$, $R$ and $S$. Note that the persistent feature for $R$ and $S$ is located way above the diagonal line in Figures \ref{figure: persistence diagrams R} and \ref{figure: persistence diagrams S}. Meanwhile, the features in the diagram of $Q$ accumulate around the diagonal line in Figure \ref{figure: persistence diagrams Q}. This contributes a large difference between the diagram of each $R$ and $S$ and that of $Q$. Thus, we establish that the diagram of $R$ is similar to that of $S$, while dissimilar to that of $Q$. This fact is confirmed by the cosine distance in Table \ref{table: cosine distance}. It is small for $R$ and $S$ to indicate their similarity, while it is close to $1$ for $Q$ and $R$ to ascertain their dissimilarity.

However, the same fact is not conveyed clearly by the other distances. Each of their values for $R$ and $S$ is noticeably close to that for $Q$ and $R$. This is especially apparent for the distance defined by $\lVert\cdot\rVert_2$ in Table \ref{table: norm distance}. Hence in this case, these distances cannot distinguish similarity or dissimilarity between persistence diagrams.

It is interesting to note that each distance records a relatively high value for $Q$ and $S$, which implies a large difference between their diagrams with respect to $\mathsf{H}_1$. This is expected since their diagrams are dissimilar. However, this fact is clearly indicated by the cosine distance whose value is close to $1$, and even around the value for $Q$ and $R$. Unlike other distances, it is more confident to distinguish the similarity or dissimilarity between the diagrams for those three data sets by using the cosine distance.

As a conclusion, our demonstration shows that the cosine distance, and thus cosine similarity, are more accurate to indicate similarity between the diagrams in comparison to other distances. We shall clarify that the demonstration has been replicated multiple times with different samples from those three shapes, and our conclusion on the cosine similarity still holds in each run.

\section{Conclusion}
In this paper, we have introduced the cosine similarity as a new indicator for similarity between persistence diagrams. It measures the extent of similarity within two extremes, which are perfect similarity and dissimilarity. While the former simply means equality of two diagrams, the perfect dissimilarity refers to the notion of orthogonality between the diagrams. The cosine similarity is found to be more accurate in this purpose, unlike the common distances for persistence diagrams such as the bottleneck and Wasserstein distances.

For future work, we provide some insights here on the research problem of similarity between persistence diagrams. As mentioned in Section \ref{section: introduction}, there are other approaches in the literature to measure the similarity. We are yet to compare them with our cosine similarity in terms of accuracy and efficiency. For example, the RST parametric model \cite{Agami} is able to account for geometrical and topological differences on the shapes of data sets. This cannot be achieved by the cosine similarity since data sets from two shapes of different sizes but topologically equal are perceived to be different under the cosine similarity. More research is needed to investigate the advantages and disadvantages of each method, including the cosine similarity itself.

It is possible to propose a variation of the cosine similarity. For example, we define another variation as
\begin{equation*}
	\varrho(D_1, D_2)=\frac{2\left\langle\varphi(D_1), \varphi(D_2)\right\rangle}{\left\|\varphi(D_1)\right\|^2+\left\|\varphi(D_2)\right\|^2}
\end{equation*}
for persistence diagrams $D_1$ and $D_2$ where one of them is non-empty. This is motivated by (\ref{equation: inspiration variation}). In fact, this equation enables the corresponding cosine distance to be expressed nicely as
\begin{equation*}
	\varrho^*(D_1, D_2)=\frac{\left\|\varphi(D_1)-\varphi(D_2)\right\|^2}{\left\|\varphi(D_1)\right\|^2+\left\|\varphi(D_2)\right\|^2}.
\end{equation*}
All results in this paper still hold for this variation. It is worth noting that $\varrho$ is strictly smaller than $\varsigma$ except in case of perfect similarity or dissimilarity. This is proved easily by applying AM-GM inequality \cite{Beckenbach-Bellman} to Cauchy-Schwarz inequality in (\ref{equation: Cauchy-Schwarz}).

Instead of persistence landscapes, the cosine similarity can be defined by employing a different vectorization of persistence diagrams, provided that its images is in an inner product space. There are some vectorizations in the literature that have potential for this purpose, such as Betti curve \cite{Edelsbrunner}, persistence silhouette \cite{Chazal}, life entropy curve \cite{Atienza}, heat kernel \cite{Reininghaus}, persistence image \cite{Adam}, persistence block \cite{Chan}, PD thresholding curve \cite{Chung-Day} and other persistence curves \cite{Chung-Lawson}. However, the perfect similarity and dissimilarity may have different meanings depending on the vectorization.

\backmatter

\bmhead{Acknowledgements}
We are thankful to the referees for constructive comments and suggestions which help to improve the quality of this paper.

\bmhead{Funding}
This work is supported by the research grant DIP-2024-004 provided by Universiti Kebangsaan Malaysia.

\bmhead{Conflict of interest}
The authors declare that they have no conflict of interest.

%
%



\bibliography{sn-bibliography}

\end{document}